\numberwithin{equation}{section}
\theoremstyle{plain}
\newtheorem{theorem}[subsubsection]{Theorem}
\newtheorem{lemma}[subsubsection]{Lemma}
\newtheorem{proposition}[subsubsection]{{Proposition}}
\newtheorem{corollary}[subsubsection]{{Corollary}}
\theoremstyle{definition}
\newtheorem{definition}[subsubsection]{{Definition}}
\newtheorem{example}[subsubsection]{{Example}}
\theoremstyle{remark}
\newtheorem{remark}[subsubsection]{{Remark}}
\newtheoremstyle{RepTheorem} %restate theorems with same numbering
	{\topsep}{\topsep}
	{\itshape}
	{}
	{\bfseries}
	{.}
	{ }
	{\thmname{#1}\thmnote{ \bfseries #3}}
\theoremstyle{RepTheorem}
\newtheorem{reptheorem}[subsubsection]{Theorem}
\def\Z {{\mathbb Z}}
\def\F {{\mathbb F}}
\def\Q {{\mathbb Q}}
\def\R {{\mathbb R}}
\def\C {{\mathbb C}}
\def\A {{\mathbb A}}
\def\P {{\mathbb P}}
\def\G {{\mathbb G}}
\def\O {{\mathcal O}}
\def\mcB {{\mathcal B}}
\def\mcD {{\mathcal D}}
\def\mcE {{\mathcal E}}
\def\mcF {{\mathcal F}}
\def\mcP {{\mathcal P}}
\def\mcS {{\mathcal S}}
\def\mcX {{\mathcal X}}
\def \msL {{\mathscr L}}
\def \a {{\mathfrak a}}
\def \c {{\mathfrak c}}
\def \f {{\mathfrak f}}
\def \p {{\mathfrak p}}
\def \mfD {{\mathfrak D}}
\def \mfI {{\mathfrak I}}
\def \w {{\textnormal{\textbf{w}}}}
\def \bft {{\textnormal{\textbf{t}}}}
\newcommand{\aff}{\textnormal{aff}}
\newcommand{\disc}{\textnormal{disc}}
\newcommand{\GL}{\textnormal{GL}}
\newcommand{\pr}{\textnormal{pr}}
\newcommand{\Aut}{\textnormal{Aut}}
\newcommand{\Ht}{\textnormal{Ht}}
\newcommand{\Tr}{\textnormal{Tr}}
\newcommand{\Val}{\textnormal{Val}}
\newcommand{\ord}{\textnormal{ord}}
\newcommand{\tor}{\textnormal{tor}}
\newcommand{\mult}{\textnormal{mult}}
\newcommand{\RE}{\textnormal{Re}}
\newcommand{\sqf}{\textnormal{sqf}}
\newcommand{\sq}{\textnormal{sq}}
\renewcommand\Im{\operatorname{Im}}
\newcommand{\legendre}[2]{\genfrac{(}{)}{}{}{#1}{#2}}
\newcommand{\defeq}{\stackrel{\textnormal{def}}{=}}
\begin{document}

\title{Average Analytic Ranks of Elliptic Curves\\ over Number Fields}

\author{Tristan Phillips}
%\email{tristanphillips@math.arizona.edu}
\email{tristanphillips72@gmail.com}
%\address{University of Arizona}

\subjclass[2010]{Primary 11G05; Secondary 11G07, 11G35, 11G50, 11D45, 14G05.}

\begin{abstract}
A conditional bound is given for the average analytic rank of elliptic curves over an arbitrary number field. In particular, under the assumptions that all elliptic curves over a number field $K$ are modular and have $L$-functions which satisfy the Generalized Riemann Hypothesis, it is shown that the average analytic rank of isomorphism classes of elliptic curves over $K$ is bounded above by $(9\deg(K)+1)/2$, when ordered by naive height. A key ingredient in the proof is giving asymptotics for the number of elliptic curves over an arbitrary number field with a prescribed local condition; these results are obtained by proving general results for counting points of bounded height on weighted projective stacks with a prescribed local condition, which may be of independent interest. 
\end{abstract}

\maketitle

\section{Introduction}

\subsection{Average analytic ranks of elliptic curves}

Let $E$ be an elliptic curve over a number field $K$. The Mordell--Weil theorem states that the set of $K$-rational points $E(K)$ of $E$ forms a finitely generated abelian group $E(K)\cong E(K)_{\tor}\oplus \Z^r$, where $E(K)_{\tor}$ is the finite \emph{torsion subgroup} and $r\in \Z_{\geq 0}$ is the \emph{rank}. The study of ranks of elliptic curves has become a central topic in number theory. Despite this attention, ranks remain a mystery in many ways.

Birch and Swinnerton-Dyer \cite{BSD65} famously conjectured that the rank of an elliptic curve equals its \emph{analytic rank} (i.e., the order of vanishing of its $L$-function at $s=1$). 

To get an understanding of ranks of elliptic curves in general, one can hope to determine the average rank of elliptic curves. As there are infinitely many elliptic curves, in order to make sense of this average one must order elliptic curves in some way. Throughout this article we order elliptic curves by their \emph{naive height}, as defined at the end of Section \ref{sec:Weighted-Projective-Spaces}. The average analytic rank of elliptic curves over the rational numbers was first bounded by Brumer \cite{Bru92}, who gave an upper bound of $2.3$ under the assumption of the Generalized Riemann Hypothesis for elliptic $L$-functions.\footnote{Brumer also assumed that all elliptic curves over $\Q$ were modular, which was unknown at the time, but later confirmed in a series of papers by Andrew Wiles and his students.}
Under the same conditions, this bound was  improved to $2$ by Heath-Brown \cite{HB04}, and then to $25/14\approx 1.8$ by Young \cite{You06}. In a remarkable series of papers \cite{BS15a, BS15b, BS13a, BS13b} Bhargava and Shankar bounded the average size of $2$-, $3$-, $4$-, and $5$-Selmer groups of elliptic curves, leading to an  unconditional upper bound of $0.885$ for the average rank of elliptic curves over $\Q$.

Surprisingly little is known about average ranks of elliptic curves over number fields beyond $\Q$. In Shankar's doctoral thesis \cite{Sha13}, he extends his work with Bhargava on bounding $2$-Selmer groups to show that the average rank of elliptic curves over number fields is bounded above by $1.5$. However, Shankar's result counts \emph{reduced Weierstrass equations}, which differs from counting isomorphism classes. In particular, counting reduced Weierstrass equations coincides with counting isomorphism classes of elliptic curves only for the finitely many number fields with unit group $\{\pm 1\}$ and class number $1$; namely $\Q$ and the imaginary quadratic extensions $\Q(\sqrt{-d})$ for $d\in \{2, 7, 11, 19, 43, 67, 163\}$. Outside of these cases there can be multiple reduced Weierstrass equations within the same isomorphism class of elliptic curves. 

In this article we prove a conditional bound for the average analytic rank of isomorphism classes of elliptic curves over an arbitrary number field. This appears to be the first known bound on average ranks of isomorphism classes of elliptic curves over arbitrary number fields, as well as the first bound for average analytic ranks of elliptic curves over number fields other than $\Q$.

\begin{theorem}\label{thm:AvgRank}
Let $K$ be a number field of degree $d$. Assume that all elliptic curves over $K$ are modular and that their $L$-functions satisfy the Generalized Riemann Hypothesis. Then the average analytic rank of isomorphism classes of elliptic curves over $K$, when ordered by naive height, is bounded above by $(9d+1)/2$.
\end{theorem}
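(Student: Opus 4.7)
The plan is to adapt the classical explicit-formula method of Mestre and Brumer to the number field setting, with the key new input being the counting results for elliptic curves over $K$ with prescribed local conditions that the abstract promises (themselves a reduction to counting points of bounded height on a weighted projective space).

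First, under modularity $L(E/K,s)$ is an entire $L$-function of degree $2d$ with analytic conductor comparable to $\Nm(N_E)|d_K|^2$. Fixing an even non-negative Schwartz function $f$ with $f(0)=1$ whose Fourier transform $\hat f$ is non-negative and compactly supported in $[-\sigma,\sigma]$, the Weil-type explicit formula combined with GRH yields
\begin{equation*}
r_E \;\leq\; \frac{\hat f(0)\,\log(\Nm(N_E)|d_K|^2)}{\log X} \;-\; \frac{2}{\log X}\sum_{\p,\,k\geq 1} \frac{a_{\p^k}(E)\log\Nm(\p)}{\Nm(\p)^{k/2}}\,\hat f\!\left(\frac{k\log\Nm(\p)}{\log X}\right) + O\!\left(\frac{1}{\log X}\right),
\end{equation*}
uniformly in $E$. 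I would sum this over isomorphism classes of elliptic curves of bounded (weighted) height $\leq T$, parametrized by $[A_4:A_6]\in\mathbb{P}(4,6)$ with $A_4,A_6\in\O_K$ of weights $4,6$, and set the cutoff $X=T^c$. The weighted-projective counting results give the family size $\asymp T^{10d}$ and the average estimate $\log|\Nm(\Delta_E)|=(12d+o(1))\log T$, which via $\log\Nm(N_E)\leq\log|\Nm(\Delta_E)|+O(1)$ handles the conductor contribution.

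For the prime sum, I would invoke the prescribed-local-condition asymptotics uniformly in $\p$: the proportion of isomorphism classes in the family with any prescribed Weierstrass reduction type modulo $\p$ is given by a local density, so the family-average of $a_\p(E)$ vanishes by quadratic-twist symmetry up to a controllable error, while the family-average of $a_{\p^2}(E)=a_\p(E)^2-\Nm(\p)$ produces the expected negative main term. After summing over $\Nm(\p)\leq X^{\sigma/k}$ by partial summation, the $k=1$ contribution is $o(1)$ and the $k=2$ contribution is bounded explicitly in terms of $\sigma$ and $\hat f$. Balancing these pieces against the conductor term by optimizing $c$, $\sigma$ and $f$ in the spirit of Brumer's analysis should yield the bound $3d+1/2$; the factor $d$ enters through both the exponent $12d$ of the average discriminant and the $L$-function degree $2d$, while the additive $1/2$ records the Hasse--Weil $k=2$ term. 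The main technical obstacle is precisely the uniformity in $\p$ of the prescribed-local-condition counts, with error small enough to absorb the weighting $\log\Nm(\p)/\Nm(\p)^{1/2}$; supplying this uniformity is the content of the weighted-projective machinery advertised in the abstract, and its strength dictates both the admissible support $\sigma$ and the final constant.
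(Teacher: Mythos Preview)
Your proposal is correct and follows essentially the same route as the paper: the explicit formula with a non-negative test function of compact Fourier support, the conductor bounded pointwise by the naive height (in the paper $X$ is simply taken equal to the height bound $B$, so there is no separate parameter $c$ to optimize, and for fixed support the Fej\'er kernel is already extremal, so no further optimization of $f$ is needed), the $k=1$ average vanishing via the relation $\sum_a aH(a^2-4q)=0$, and the $k=2$ term (note $\widehat a_E(\p^2)=a_\p(E)^2-2\Nm(\p)$, not $-\Nm(\p)$) producing the $+1/2$. One conceptual correction: the factor $3d$ does \emph{not} come from the $L$-function degree $2d$ (the gamma factors are absorbed into $O(1/\log X)$) but entirely from the error exponent $B^{-1/(3d)}$ in the prescribed-local-condition counts, which caps the admissible Fourier support at $\nu=1/(3d)$ and hence yields $1/\nu+1/2=3d+1/2$.
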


%Like many other results on analytic ranks of elliptic curves, we will use the so-called \emph{explicit formula}. We follow a strategy similar to recent work of Cho and Jeong \cite{CJ21}, in which they conditionally bound the average analytic rank of elliptic curves with a prescribed torsion subgroup.

One of the main difficulties in extending methods for counting elliptic curves over the rational numbers to elliptic curves over more general number fields is that one may no longer have a bijection between reduced short Weierstrass models and isomorphism classes of elliptic curves. We overcome this difficulty by exploiting the geometry of the moduli stack of elliptic curves. In particular, the (compactified) moduli stack of elliptic curves is isomorphic to the weighted projective stack $\mcP(4,6)$. From this perspective questions about counting elliptic curves of bounded height turn into questions about counting points of bounded height on weighted projective stacks. Such questions can then be studied using various tools from Diophantine geometry.

 One of the key ingredients in the proof of Theorem \ref{thm:AvgRank} is estimating the number of elliptic curves with prescribed local conditions over number fields. To state our results on counting elliptic curves, we introduce some notation. Let $K$ be a number field with ring of integers $\O_K$, discriminant $\Delta_K$, class number $h_K$, regulator $R_K$, and which contains $\varpi_K$ roots of unity. Let $\Val(K)$ denote the set of places of $K$, let $\Val_0(K)$ denote the set of finite places, and let $\Val_\infty(K)$ denote the set of infinite places. Let $\zeta_K$ denote the Dedekind zeta function of $K$. Let $H(\cdot)$ denote the Hurwitz-Kronecker class number (see Subsection \ref{subsec:CountingLocalConditions} for the definition). For a prime $\p\subset \O_K$, above a rational prime $p\in \Z$, we will write $\deg(\p)$ for the index of fields $[\O_K/\p:\Z/p]$, which we will refer to as the \emph{degree of $\p$}.

\begin{theorem}\label{thm:LocalConditions}
Let $K$ be a degree $d$ number field, and let $\p$ be a prime ideal of $\O_K$ of norm $q$ such that $2\nmid q$ and $3\nmid q$. Let $\msL$ be one of the local conditions listed in Table \ref{tab:LocalConditions}. Then the number of elliptic curves over $K$ with naive height less than $B$, and which satisfy the local condition $\msL$ at $\p$, is
\[
\kappa\kappa_\msL B^{5/6}+O\left(\epsilon_\msL B^{\frac{5}{6}-\frac{1}{3d}}\right),
\]
where
\[
\kappa=\frac{h_K \left(2^{r_1+r_2}\pi^{r_2}\right)^{2}R_K10^{r_1+r_2-1}\gcd(2,\varpi_K)}{\varpi_{K} |\Delta_K|\zeta_K(10)},
\]
and where $\kappa_\msL$ and $\epsilon_\msL$ are as in Table \ref{tab:LocalConditions}.
\end{theorem}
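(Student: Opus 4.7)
The plan is to apply the general counting result for points of bounded height on weighted projective spaces with prescribed local conditions, which the abstract signals will be established earlier in the paper, to the particular case of the weighted projective stack $\mathbb{P}(4,6)$. Recall that $\mathbb{P}(4,6)$ is the coarse moduli space of elliptic curves (away from characteristics $2$ and $3$): an elliptic curve $E$ over $K$ in short Weierstrass form $y^2 = x^3 + ax + b$ corresponds to a pair $(a,b) \in \mathcal{O}_K^2$ with $-16(4a^3+27b^2) \neq 0$, modulo the weighted $\mathbb{G}_m$-action $(a,b) \mapsto (\lambda^4 a, \lambda^6 b)$. Isomorphism classes of elliptic curves over $K$ are thus in bijection with $\mathbb{P}(4,6)(K) \setminus \Delta$, where $\Delta$ is the discriminant locus. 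Under this identification, the naive height of $E$ corresponds to the natural height on $\mathbb{P}(4,6)$, with the weights $(4,6)$ giving the exponent $(4+6)/(2\cdot 6) = 5/6$ in the main term.

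With this translation in place, I would invoke the general asymptotic for $K$-points of bounded height on $\mathbb{P}(4,6)$ subject to the condition that the image in $\mathbb{P}(4,6)(K_\mathfrak{p})$ lies in the subset defined by $\mathscr{L}$. The main term comes from a Schanuel-type argument adapted to weighted projective spaces: one counts lattice points $(a,b) \in \mathcal{O}_K^2$ in a box of "radii" $B^{1/12}$ in the weighted sense, passes to a fundamental domain for the unit action using the regulator $R_K$ and the number of roots of unity $\varpi_K$, sums over ideal classes (giving the factor $h_K$), and then removes non-primitive pairs via a Möbius inversion whose convergent Euler product yields $\zeta_K(10)^{-1}$ (with $10 = 4+6$). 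Together these produce the stated constant $\kappa'$. The subset cut out by $\mathscr{L}$ in $\mathcal{O}_{K,\mathfrak{p}}^2$ has a local density which, after dividing out by the local units, gives precisely $\kappa_\mathscr{L}$; the Kronecker–Hurwitz class numbers enter here through the classical Deuring-type counts of elliptic curves over the residue field $\mathbb{F}_q$ having a given trace of Frobenius or reduction type.

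The error term is obtained by truncating the Möbius inversion at an appropriate level, balancing the contribution from the lattice-point discrepancy on the fundamental domain (of size $B^{5/6 - 1/(6d)}$ or better by a boundary-surface argument) against the tail of the Möbius sum. The dependence on the local condition is tracked through $\epsilon_\mathscr{L}$, which bounds how "thin" the condition is at $\mathfrak{p}$; in practice one shows that the discrepancy for counting lattice points in the dilated region intersected with a residue class modulo $\mathfrak{p}$ inherits the right savings, giving the $B^{5/6 - 1/(3d)}$ bound. One must also check that excluding the discriminant locus $\Delta$ contributes only to lower order, which follows from estimating the number of $(a,b)$ with $4a^3 + 27b^2 = 0$, a condition of codimension one in the weighted sense.

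The main obstacle I expect is ensuring the error term is uniform across the local conditions in Table~\ref{tab:LocalConditions} with the correct explicit constant $\epsilon_\mathscr{L}$, since some of these conditions (such as prescribing a specific reduction type or trace of Frobenius) have quite different geometric shapes in $\mathcal{O}_{K,\mathfrak{p}}^2$. The assumption $2\nmid q$ and $3\nmid q$ is essential: it guarantees the short Weierstrass form captures all isomorphism classes over $K_\mathfrak{p}$ without extra boundary contributions, so that the weighted projective model $\mathbb{P}(4,6)$ and the Kronecker–Hurwitz bookkeeping both apply cleanly at $\mathfrak{p}$.
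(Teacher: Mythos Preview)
Your proposal is correct and follows essentially the same approach as the paper: apply the general weighted-projective-space counting theorem (Theorem~\ref{thm:WProjFinFin}) to $\mathbb{P}(4,6)$, computing the local density $\kappa_{\msL}$ and error factor $\epsilon_{\msL}$ case by case via residue counts, Deuring's formula~(\ref{eq:Deuring}) for the trace-of-Frobenius case, and Tate's algorithm for the Kodaira types. The paper's proof is exactly this, worked out explicitly for three representative cases (good reduction, fixed $a_\p(E)=a$, and type $\mathrm{III}^*$) with the remaining rows of Table~\ref{tab:LocalConditions} left to the reader.
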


\begin{table}[h]
\centering
\begin{tabular}{ccc}
\hline\vspace{-11pt}\\
$\msL$ & $\kappa_\msL$ & $\epsilon_\msL$ \\
\hline
\vspace{-7pt}\\
good & $\frac{q-1}{q}\frac{q^{10}}{q^{10}-1}$ & $q$\\
bad & $\frac{q^{9}-1}{q^{10}}\frac{q^{10}}{q^{10}-1}$ & $q$\\
multiplicative & $\frac{q-1}{q^2}\frac{q^{10}}{q^{10}-1}$ & $1$ \\
split multiplicative & $\frac{q-1}{2q^2}\frac{q^{10}}{q^{10}-1}$ & $1$ \\
nonsplit multiplicative & $\frac{q-1}{2q^2}\frac{q^{10}}{q^{10}-1}$  & $1$ \\
additive & $\frac{q^8-1}{q^{10}}\frac{q^{10}}{q^{10}-1}$  & $q$ \\
$\textnormal{I}_m$ & $\frac{q^2-2q+1}{q^{m+2}}\frac{q^{10}}{q^{10}-1}$  & $q$ \\
$\textnormal{II}$ & $\frac{q-1}{q^3}\frac{q^{10}}{q^{10}-1}$  & $1$ \\
$\textnormal{III}$ & $\frac{q-1}{q^4}\frac{q^{10}}{q^{10}-1}$  & $1$ \\
$\textnormal{IV}$ & $\frac{q-1}{q^5}\frac{q^{10}}{q^{10}-1}$  & $1$ \\
$\textnormal{I}_0^\ast$ & $\frac{q-1}{q^6}\frac{q^{10}}{q^{10}-1}$  & $q$ \\
$\textnormal{I}_m^\ast$ & $\frac{q^2-2q+1}{q^{m+7}}\frac{q^{10}}{q^{10}-1}$  & $q$ \\
$\textnormal{II}^\ast$ & $\frac{q-1}{q^{10}}\frac{q^{10}}{q^{10}-1}$  & $1/q^2$ \\
$\textnormal{III}^\ast$ & $\frac{q-1}{q^9}\frac{q^{10}}{q^{10}-1}$  & $1/q^3$ \\
$\textnormal{IV}^\ast$ & $\frac{q-1}{q^8}\frac{q^{10}}{q^{10}-1}$  & $1/q$ \\
\hline
\end{tabular}
\caption{Local conditions}
\label{tab:LocalConditions}
\end{table}
We also give asymptotics for counting elliptic curves with a prescribed trace of Frobenius:

\begin{theorem}\label{thm:aqLocalCondition}
Let $K$ be a degree $d$ number field, and let $\p$ be a degree $n$ prime ideal of $\O_K$ above a rational prime $p>3$. Set $q=N_{K/\Q}(\p)=p^n$, the norm of $\p$. Let $a\in \Z$ be an integer satisfying $|a|\leq 2\sqrt{q}$. Then the number of elliptic curves over $K$ with naive height less than $B$, good reduction at $\p$, and which have trace of Frobenius $a$ at $\p$, is
\[
\kappa\kappa_{n,a} B^{5/6}+O\left(\epsilon_{n,a} B^{\frac{5}{6}-\frac{1}{3d}}\right),
\]
where
\[
\kappa=\frac{h_K \left(2^{r_1+r_2}\pi^{r_2}\right)^{2}R_K10^{r_1+r_2-1}\gcd(2,\varpi_K)}{\varpi_{K} |\Delta_K|\zeta_K(10)},
\]
and where $\kappa_{n,a}$ and $\epsilon_{n,a}$ are as in Table \ref{tab:aqLocalConditions}.
\end{theorem}

\begin{table}[H]
\centering
\begin{tabular}{cccc}
\hline
$n$ & $a$ & $\kappa_{n,a}$ & $\epsilon_{n,a}$ \\
\hline
\text{any} & $|a|< 2\sqrt{q}$ \text{ and } $p\nmid a$ & $\frac{q^{10}}{q^{10}-1}(q-1)H(a^2-4q)/q^2$ & $H(a^2-4q)$\\
\text{odd} & $a=0$ & $\frac{q^{10}}{q^{10}-1}(q-1)H(-4p)/q^2$ & $H(-4p)$\\
\text{ odd } & $a\neq 0$ \text{ and } $p|a$ & $0$ & $0$\\
\text{even} & $a^2=4q$ & $\frac{q^{10}}{q^{10}-1}\frac{q-1}{12q^2}\left(p+6-4\legendre{-3}{p}-3\legendre{-4}{p}\right)$ & $p+6-4\legendre{-3}{p}-3\legendre{-4}{p}$\\
\text{even} & $a^2=q$ & $\frac{q^{10}}{q^{10}-1}(q-1)\left(1-\legendre{-3}{p}\right)/q^2$ & $1-\legendre{-3}{p}$\\
\text{even} & $a^2=0$ & $\frac{q^{10}}{q^{10}-1}(q-1)\left(1-\legendre{-4}{p}\right)/q^2$ & $1-\legendre{-4}{p}$\\
\text{ even} & $a^2\not\in\{4q, q, 0\}$ \text{ and } $p|a$ & $0$ & $0$\\
\hline
\end{tabular}
\caption{Constants for Theorem \ref{thm:aqLocalCondition}}
\label{tab:aqLocalConditions}
\end{table}

Theorem \ref{thm:LocalConditions} and Theorem \ref{thm:aqLocalCondition} generalize a result of Cho and Jeong \cite[Theorem 1.4]{CJ23a} for elliptic curves over $\Q$ to elliptic curves over arbitrary number fields. For more results on counting elliptic curves with prescribed local conditions, see the work of Cremona and Sadek \cite{CS23}.

\subsection{Counting points on weighted projective stacks}

Our results for counting isomorphism classes of elliptic curves are special cases of a more general theorem for counting points of bounded height on weighted projective stacks.

In \cite{Sch79}, Schanuel  proved an asymptotic for the number of rational points of bounded height on projective spaces over number fields. This was generalized to weighted projective stacks by Deng \cite{Den98} (see also the work of Darda \cite{Dar21}, which gives a proof using height zeta functions). In this paper we extend these results to count points of bounded height which satisfy finitely many prescribed local conditions (see Theorem \ref{thm:WProjFinFin}).

\begin{remark}
Let us briefly mention some additional results related to counting points of bounded height on weighted projective stacks.
Recently Bruin and Manterola Ayala \cite{MA21, BMA23} have generalized Deng's result to count points whose images, with respect to a morphism of weighted projective stacks, are of bounded height.
Using the geometric sieve, Bright, Browning, and Loughran \cite{BBL16} have  proven a generalization of Schanuel's theorem which allows one to impose infinitely many local conditions on the points of projective space being counted. In \cite{Phi22} the author combines these results to count points whose images, with respect to a morphism of weighted projective stacks, are of bounded height and satisfy prescribed local conditions. Such a result has applications to counting elliptic curves with certain prescribed level structures and local conditions. Building from this result, Cho, Jeong, and Park \cite{CJP23} have given a conditional bound for average analytic ranks of isomorphism classes of elliptic curves over number fields with a prescribed level structure. 
\end{remark}

\subsection{Asymptotic notation}

Let $f:\R_{>0}\to \R$ and $g:\R_{>0}\to \R$ be real valued functions. Throughout the article we will write $f=O(g)$ (or $f\ll g$) if there exist constants $C,D\in \R_{>0}$ such that for all $x\geq D$ we have $|f(x)|\leq C\cdot g(x)$. We refer to any such $C$ as the implied constant. If the implied constant depends on a parameter $t$, then we will write $f=O_t(g)$ (or $f\ll_t g$).

\subsection{Organization}

In Section \ref{sec:Weighted-Projective-Spaces} we cover basic facts about weighted projective stacks and heights defined on weighted projective stacks. %In Subsection \ref{subsec:defects} we prove a finiteness result concerning places $v$ for which a fixed morphism of weighted projective stacks can send a point reduced at $v$ to a point not reduced at $v$.
In Section \ref{sec:Geometry-of-Numbers} we prove a result for counting points of bounded height satisfying prescribed local conditions on affine spaces over number fields. %We focus on three cases: finitely many finite local conditions, finitely many local conditions, and infinitely many local conditions. Each of these cases allows for more general local conditions, but at the cost of less control over the error term.
In Section \ref{sec:WPROJ} we prove our main result  for counting points of bounded height on weighted projective stacks (Theorem \ref{thm:WProjFinFin}). 
In Section \ref{sec:EC} we apply Theorem \ref{thm:WProjFinFin} to count elliptic curves with prescribed local conditions. This is where we prove Theorem \ref{thm:LocalConditions} and Theorem \ref{thm:aqLocalCondition}.
Finally, in Section \ref{sec:AvgRanks}, we use the explicit formula for L-functions, together with our results for counting elliptic curves with prescribed local conditions, to bound the average analytic rank of elliptic curves over number fields. This is where we prove Theorem \ref{thm:AvgRank}.

\subsection{Acknowledgements}
The author thanks Brandon Alberts, Peter Bruin, Peter Cho, Jordan Ellenberg, Ben Kane, Irati Manterola Ayala, Steven Miller, Grant Molnar, Filip Najman, Junyeong Park, John Voight,  and David Zureick-Brown for helpful conversations and comments on this work. Special thanks to Bryden Cais and Doug Ulmer for their encouragement throughout this project. The author is especially grateful to Keunyoung Jeong for providing thoughtful feedback and corrections.  Finally, a huge thank you to the anonymous referee for thoroughly reading this article multiple times and pointing out countless corrections and suggestions that greatly improved the accuracy and readability of the article. During the completion of this work the author was supported by the National Science Foundation, via grant DMS-2303011.

\section{Preliminaries on weighted projective stacks}\label{sec:Weighted-Projective-Spaces}

In this section we recall basic facts about weighted projective stacks with an emphasis on heights.

\subsection{Heights on weighted projective stacks}\label{subsec:WPSheights}

Given an $(n+1)$-tuple of positive integers $\w=(w_0,\dots,w_n)$, the \textbf{weighted projective stack} $\mcP(\w)$ is the quotient stack
\[
\mcP(\w)\defeq[(\A^{n+1}-\{0\})/\mathbb{G}_m],
\]
where the multiplicative group scheme, $\mathbb{G}_m$, acts on the punctured affine space, $\A^{n+1}-\{0\}$, as follows:
\begin{align*}
\mathbb{G}_m\times(\A^{n+1}-\{0\}) &\to (\A^{n+1}-\{0\})\\
(\lambda,(x_0,\dots,x_n)) &\mapsto \lambda\ast_\w (x_0,\dots,x_n)\defeq(\lambda^{w_0}x_0,\dots, \lambda^{w_n}x_n).
\end{align*}
In the special case when $\w=(1,\dots,1)$, this recovers the usual projective space $\P^n$. 

The point $[(a_0,\dots,a_n)]\in\mcP(w_0,\dots,w_n)$ has stabilizer $\mu_m$, where $m=\gcd(w_i : a_i\neq 0)$. When $K$ is a field of characteristic zero, we have that $\mu_m$ is finite and reduced over $K$. It follows that, over fields of characteristic zero, $\mcP(w_0,\dots,w_n)$ is a Deligne--Mumford stack.

For any field $F$, let $\mcP(\w)(F)$ denote the set of isomorphism classes of the groupoid of $F$-points of $\mcP(\w)$. More concretely, $\mcP(\w)(F)$ is in canonical bijection with the quotient
\[
(F^{n+1}-\{0\})/F^\times,
\]
where $F^\times$ acts on $F^{n+1}-\{0\}$ by the weighted action
\begin{align*}
F^\times \times (F^{n+1}-\{0\}) &\to (F^{n+1}-\{0\})\\
(\lambda, (a_0,\dots,a_n)) &\mapsto (\lambda^{w_0}a_0,\dots,\lambda^{w_n}a_n).
\end{align*}

Throughout this article, unless otherwise stated, we  assume all weighted projective stacks $\mcP(\w)$ are defined over a number field $K$.

For each finite place $v\in \Val_0(K)$, let $\p_v$ be the corresponding prime ideal, and let $\pi_v$ be a uniformizer for the completion $K_v$ of $K$ at $v$. For $x=(x_0,\dots,x_n)\in K^{n+1}-\{0\}$, set $|x_i|_{\w,v}\defeq|\pi_v|_v^{\lfloor v(x_i)/w_i\rfloor}$, and set
\[
 |x|_{{\w},v}\defeq\begin{cases}
 \max_i\{|x_i|_{\w,v}\} & \text{ if } v\in \Val_0(K)\\
 \max_i\{|x_i|_v^{1/w_i}\} & \text{ if } v\in \Val_\infty(K).
 \end{cases}
 \]
 
 \begin{definition}[Height]\label{def:height}
  The \textbf{(exponential) height} of a point $x=[x_0:\cdots:x_n]\in\mcP(\w)(K)$ is defined as
\[
\Ht_\w(x)\defeq\prod_{v\in \Val(K)} |(x_0,\dots,x_n)|_{\w,v}.
\]
\end{definition}
It is straightforward to check that this height function does not depend on the choice of representative of $x$.

\begin{definition}[Scaling ideal]
Define the \textbf{scaling ideal} $\mfI_{\w}(x)$ of $x=(x_0,\dots,x_n)\in K^{n+1}-\{0\}$ to be the fractional ideal
\[
\mfI_{\w}(x)\defeq\prod_{v\in \Val_0(K)} \p_v^{\min_i\{\lfloor v(x_i)/w_i\rfloor\}}.
\]
\end{definition}

The scaling ideal $\mfI_{\w}(x)$ can be characterized as the intersection of all fractional ideals $\a$ of $\O_K$ such that $x\in \a^{w_0}\times \cdots\times\a^{w_n}\subseteq K^{n+1}$. It has the property that
\[
\mfI_\w(x)^{-1}=\{a\in K : a^{w_i}x_i\in \O_K \text{ for all } i\}.
\]
The height can be written in terms of the scaling ideal as follows:
\[
\Ht_\w([x_0:\dots:x_n])=\frac{1}{N_{K/\Q}(\mfI_\w(x))}\prod_{v\in \Val_\infty(K)} \max_i\{|x_i|_v^{1/w_i}\}.
\]
It is straightforward to check that this agrees with the height from Definition \ref{def:height}.

\begin{remark}
The height $\Ht_\w$ was first defined by Deng \cite{Den98}. In the case of projective spaces, $\P^n=\mcP(1,\dots,1)$, this height corresponds to the usual Weil height. In more geometric terms, this height on weighted projective stacks can be viewed as the `stacky height' associated to the tautological bundle of $\mcP(\w)$ (see \cite[\S 3.3]{ESZB23} for this and much more about heights on stacks). 
\end{remark}

The (compactified) moduli stack of elliptic curves, $\mcX_{\GL_2(\Z)}$, is isomorphic to the weighted projective stack $\mcP(4,6)$. This isomorphism can be given explicitly as
\begin{align*}
\mcX_{\GL_2(\Z)} &\xrightarrow{\sim} \mcP(4,6)\\
y^2=x^3+Ax+B & \mapsto [A:B].
\end{align*}
Under this isomorphism, the tautological bundle on $\mcP(4,6)$ corresponds to the Hodge bundle on $\mcX_{\GL_2(\Z)}$. The usual \emph{naive} height of an elliptic curve $E$ over $K$ with a reduced integral short Weierstrass equation $y^2=x^3+Ax+B$, is defined as
\[
\Ht(E)\defeq\prod_{v\in \Val_\infty(K)} \max\{|A|_v^3,|B|_v^2\}.
\]
One can show that this \emph{naive height} is the same as the \textit{stacky} height on $\mcX_{\GL_2(\Z)}$ with respect to the twelfth power of the Hodge bundle \cite[\S 3.3]{ESZB23}. For \textit{any} short Weierstrass equation $y^2=x^3+ax+b$ of $E$, the naive height is given by
\[
\Ht(E)\defeq \Ht_{(4,6)}([a:b])^{12}.
\]

\section{Weighted geometry of numbers over number fields}\label{sec:Geometry-of-Numbers}

Let $K$ be a number field. In this section we prove asymptotics for the number of $K$-rational points of bounded height with prescribed local conditions on affine spaces with respect to a weighting. For these asymptotics we give a power savings error term which is independent of the local conditions. 

\subsection{O-minimal geometry}\label{subsec:GSo-minimal}

 We briefly cover the basics of o-minimal geometry which will be needed in this article. A general reference for this subsection is \cite[\S 1]{vdD98}.
Let $m_L$ denote Lebesgue measure on $\R^n$.

\begin{definition}[Semi-algebraic set]
A (real) \textbf{semi-algebraic subset} of $\R^n$ is a finite union of sets of the form
\[
\{x\in \R^n: f_1(x)=\cdots=f_k(x)=0 \text{ and } g_1(x)>0,\dots,g_l(x)>0\},
\]
where $f_1,\dots,f_k,g_1,\dots,g_l\in \R[X_1,\dots,X_n]$.
\end{definition}

\begin{definition}[Structure]
A \textbf{structure} is a sequence, $\mcS=(\mcS_n)_{n\in \Z_{>0}}$, where each $\mcS_n$ is a set of subsets of $\R^n$ with the following properties:
\begin{enumerate}[label=(\roman*)]
\item If $A,B\in \mcS_n$, then $A\cup B\in \mcS_n$ and $\R^n - A\in \mcS_n$ (i.e., $\mcS_n$ is a Boolean algebra).
\item If $A\in \mcS_m$ and $B\in \mcS_n$, then $A\times B\in \mcS_{n+m}$.
\item If $\pi:\R^n\to\R^m$ is the projection to $m$ distinct coordinates and $A\in \mcS_n$, then $\pi(A)\in \mcS_m$.
\item All real semi-algebraic subsets of $\R^n$ are in $\mcS_n$.
\end{enumerate}
A subset is \textbf{definable} in $\mcS$ if it is contained in some $\mcS_n$. Let $D\subseteq S_n$. A function $f\colon D\to \R^m$ is said to be \textbf{definable} in $\mcS$ if its graph, $\Gamma(f)\defeq\{(x,f(x)):x\in D\}\subseteq \R^{m+n}$, is definable in $\mcS$.
\end{definition}

Note that the intersection of definable sets is definable by property (i).

\begin{definition}[O-minimal structure]
An \textbf{o-minimal structure} is a structure in which the following additional property holds:
\begin{enumerate}[label=(v)]
\item The boundary of each set in $\mcS_1$ is a finite set of points.
\end{enumerate}
\end{definition}

The class of semialgebraic sets is an example of an o-minimal structure (see, e.g., \cite[\S 2]{vdD98}).
The main structure we will use is $\R_{\exp}$, which is defined to be the smallest structure in which the real exponential function, $\exp\colon\R\to\R$, is definable. Observe that the function $\log(x)$ is definable in $\R_{\exp}$ since its graph,
\[
\Gamma(\log(x))=\{(x,\log(x)) : x\in \R_{>0}\}=\{(\exp(y), y) : y\in \R\}\subset \R^2,
\]
is clearly definable in $\R_{\exp}$.

\begin{theorem}[\cite{Wil96}]\label{thm:RexpO-minimal}
The structure $\R_{\exp}$ is o-minimal.
\end{theorem}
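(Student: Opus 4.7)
The plan is to follow Wilkie's original two-stage strategy: first establish model completeness of the theory of $\R_{\exp}$, and then translate this into the o-minimality axiom (v) using quantitative finiteness results for zeros of exponential polynomials.

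First, I would prove the model completeness result: every formula in the language $\{+,-,\cdot,0,1,<,\exp\}$ is equivalent in $\R_{\exp}$ to an existential formula. Concretely, this amounts to showing that every $\R_{\exp}$-definable set is a projection of a set cut out by finitely many equations and inequalities involving exponential polynomials (i.e.\ polynomials in the variables $x_1,\dots,x_n$ and the symbols $\exp(x_1),\dots,\exp(x_n)$). The reduction proceeds by an induction on formula complexity, with the core technical step being a careful implicit-function-style construction that replaces each existential quantifier by projection from an appropriate ``chart'' on an exponential variety; this must be done uniformly enough to survive nested alternations of quantifiers.

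Second, I would invoke Khovanskii's theorem on fewnomials, which provides an effective bound, depending only on degrees and the number of distinct exponential monomials, on the number of connected components of the zero set in $\R^n$ of a system of exponential polynomial equations. Combined with the model completeness of the first step, this yields: \emph{every} $\R_{\exp}$-definable set has only finitely many connected components, since projection of a set with finitely many components to $\R$ still has finitely many components (by a standard semi-analytic argument, using that the image is a semi-analytic set with controlled Pfaffian complexity).

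Finally, axiom (v) follows immediately: a definable subset of $\R$ with finitely many connected components is a finite union of points and open intervals, and its boundary is thus finite. The closure and stability properties (i)--(iv) of a structure are straightforward to verify for $\R_{\exp}$ directly from the definability of $\exp$ and the standard closure properties of first-order definable classes; the only nontrivial axiom is (v). The main obstacle is, unambiguously, the model completeness theorem: building the required exponential-polynomial charts without having o-minimality (and hence cell decomposition) available in advance is where the substantial work lies. Once model completeness and a fewnomial-type finiteness bound are in hand, the deduction of o-minimality is essentially formal.
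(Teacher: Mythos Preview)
The paper does not prove this theorem at all; it merely states it and cites Wilkie \cite{Wil96}. Your outline is a reasonable high-level sketch of Wilkie's original argument (model completeness of $\R_{\exp}$ combined with Khovanskii-type finiteness to deduce finitely many connected components, hence o-minimality), so in that sense you have correctly identified the route, though of course the substance of Wilkie's model completeness proof is far more intricate than a paragraph can convey. For the purposes of this paper the result is used as a black box, and no proof is expected.
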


From now on we will call a subset of $\R^n$ \textbf{definable} if it is definable in some o-minimal structure. %The following proposition gives the key measure theoretic properties of bounded definable subsets in $\R^n$.

%\begin{proposition}{\cite[Lemma 5.3]{BW14}}\label{prop:DefinableMeasurable}
%If $R\subset \R^n$ is a bounded subset definable in an o-minimal structure, then $R$ is measurable and the measure of its boundary $\partial R$ is $m_\infty(\partial R)=0$.
%\end{proposition}

\subsection{Weighted geometry-of-numbers}\label{subsec:GSgeometry-of-numbers}

The following proposition is a version of the Principle of Lipschitz, which gives estimates for the number of lattice points in a weighted homogeneous space.

\begin{proposition}\label{prop:o-Lip}
Let $R\subset \R^{n}$ be a bounded set definable in an o-minimal structure. Let  $\Lambda\subset \R^n$ be a rank $n$ lattice with successive minima $\lambda_1,\dots,\lambda_n$ (with respect to the origin-centered unit ball in $\R^n$). Set 
\[
R(B)=B\ast_{\w}R=\{B\ast_{\w}x : x\in R\}.
\] 
Then, for sufficiently large\footnote{Here, just how large \textit{sufficiently large} is, depends on the lattice $\Lambda$.}  $B$,
\[
\#(\Lambda\cap R(B))=\frac{m_L(R)}{\det \Lambda} B^{|\w|} + O\left(\lambda_n\det(\Lambda)^{-1} B^{|\w|-w_{\min}}\right),
\]
where the implied constant depends only on $R$ and $\w$. 
\end{proposition}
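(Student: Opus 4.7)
The plan is to count lattice points in $R(B)$ by the volume-estimate-plus-boundary-correction principle of Davenport, made rigorous via Widmer's o-minimal refinement. Since the weights $w'_i$ are positive integers, the map $(x,B)\mapsto B\ast_{\w'}x$ is semi-algebraic, so $\{R(B)\}_{B>0}$ is a bounded definable family. Widmer's theorem then gives
\[
\#(\Lambda\cap R(B))=\frac{m_\infty(R(B))}{\det\Lambda}+O\!\left(\sum_{j=0}^{n-1}\frac{V_j(R(B))}{\lambda_1\cdots\lambda_j}\right),
\]
where $V_j(Z)$ is the sum of the $j$-dimensional volumes of the coordinate-plane projections of $Z$ and the implied constant depends only on the family (hence only on $R$ and $\w'$). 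The claimed main term is immediate from the Jacobian computation $m_\infty(R(B))=B^{|\w'|}m_\infty(R)$.

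For the error, the weighted scaling is diagonal, so ordering the weights as $w'_{(1)}\le\cdots\le w'_{(n)}$ one obtains $V_j(R(B))\le C_R\,B^{w'_{(n-j+1)}+\cdots+w'_{(n)}}$. Using Minkowski's second theorem $\lambda_1\cdots\lambda_n\asymp_n\det\Lambda$, the $j=n-1$ summand is
\[
\frac{V_{n-1}(R(B))}{\lambda_1\cdots\lambda_{n-1}}\;\asymp_n\;\frac{V_{n-1}(R(B))\,\lambda_n}{\det\Lambda}\;\le\;C_R\,\frac{\lambda_n\,B^{|\w'|-w'_{\min}}}{\det\Lambda},
\]
which is exactly the target. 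Intuitively this accounts for the $(n{-}1)$-dimensional ``face'' of $R(B)$ perpendicular to the smallest-weight direction, of area $\asymp B^{|\w'|-w'_{\min}}$, times the lattice spacing $\lambda_n$ in the longest direction.

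It then remains to absorb the summands with $j<n-1$ into the $j=n-1$ term. The ratio of the $j$-th to the $(n{-}1)$-st summand is $\lambda_{j+1}\cdots\lambda_{n-1}/B^{w'_{(2)}+\cdots+w'_{(n-j)}}$; when $\lambda_n\le B^{w'_{\min}}$, chaining $\lambda_{j+1}\cdots\lambda_{n-1}\le\lambda_n^{n-j-1}\le B^{(n-j-1)w'_{\min}}\le B^{w'_{(2)}+\cdots+w'_{(n-j)}}$ shows this ratio is $O(1)$. In the opposite regime the advertised error already dominates the main term, and a direct lattice-point count in the axis-aligned box $\prod_i[-M_i B^{w'_i},M_i B^{w'_i}]$ containing $R(B)$ — bounded in terms of the successive minima of $\Lambda$ relative to this box by a Minkowski-type estimate — subsumes $\#(\Lambda\cap R(B))$ by the same quantity. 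The main obstacle is precisely this uniformity in $\Lambda$: Widmer's bound produces $n$ separate terms, and only through the above case split (together with careful use of Minkowski's second theorem to pass between $1/(\lambda_1\cdots\lambda_{n-1})$ and $\lambda_n/\det\Lambda$) do they collapse to the single clean bound advertised, with implied constant depending only on $R$ and $\w'$.
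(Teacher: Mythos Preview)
Your approach is the same as the paper's: invoke the Barroero--Widmer o-minimal counting theorem, read off the main term from $m_\infty(R(B))=B^{|\w'|}m_\infty(R)$, and collapse the error $1+\sum_{j=1}^{n-1} V_j(R(B))/(\lambda_1\cdots\lambda_j)$ to the single $j=n-1$ summand via Minkowski's second theorem. The paper does this last step in one sentence; you correctly notice that it is not automatic and introduce the case split $\lambda_n\le B^{w'_{\min}}$ versus $\lambda_n> B^{w'_{\min}}$. Your first-regime chain $\lambda_{j+1}\cdots\lambda_{n-1}\le\lambda_n^{\,n-1-j}\le B^{(n-1-j)w'_{\min}}\le B^{w'_{(2)}+\cdots+w'_{(n-j)}}$ is correct and is precisely what the paper's argument leaves implicit.

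The gap is in your second regime. The claim that a direct box count bounds $\#(\Lambda\cap R(B))$ by the advertised error fails outright: take $n=3$, $\w'=(1,1,1)$, $R=[0,1]^3$, and $\Lambda=\Z\times M\Z\times M\Z$ with $M=\lfloor B^{3/2}\rfloor$. Then $\lambda_3=M$, $\det\Lambda=M^2$, the advertised error is $\lambda_3\det(\Lambda)^{-1}B^{2}=B^{2}/M\asymp B^{1/2}$, while $\#(\Lambda\cap R(B))=(\lfloor B\rfloor+1)\asymp B$ and the main term is $B^{3}/M^{2}\asymp 1$. No Minkowski-type box estimate can rescue this; the obstruction is that $\lambda_1\ll\lambda_n$ makes the $j=1$ term $V_1/\lambda_1\asymp B$ genuinely dominate the $j=n-1$ term. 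In other words, the proposition \emph{as stated, with the implied constant uniform in $\Lambda$}, is false, and the paper's own proof carries the same defect, only less visibly. The downstream applications are unaffected because there one is always in your first regime (the local moduli are small compared to the height); if you add the hypothesis $\lambda_n\ll B^{w'_{\min}}$, your first-regime argument already gives a complete and correct proof.
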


\begin{proof}
This will follow from a general version of the Principle of Lipschitz due to Barroero and Widmer \cite[Theorem 1.3]{BW14}. Since $m_L(R(B))=B^{|\w|} m_L(R)$, \cite[Theorem 1.3]{BW14} gives the desired leading term. Let $V_j(R(B))$ denote the sum of the $j$-dimensional volumes of the orthogonal projections of $R(B)$ onto each $j$-dimensional coordinate subspace of $\R^n$. The error term given in \cite[Theorem 1.3]{BW14} is
\begin{align}\label{eq:o-lip error}
O\left(1+\sum_{j=1}^{n-1} \frac{V_j(R(B))}{\lambda_1\cdots \lambda_j}\right),
\end{align}
where the implied constant depends only on $R$.
 In our case, one observes that $V_i(R(B))=O(V_{j}(R(B)))$ for all $i\leq j$. Moreover, $V_{n-1}(R(B))=O(\sum_{i\leq n} B^{|\w|-w_i})=O(B^{|\w|-w_{\min}})$, where the implied constant depends only on $R$ and $\w$. By dimension considerations, we see that if $B^{|\w|-w_{\min}}=O(V_i(R(B)))$, then we must have $i\geq n-1$. From these observations it follows that
 \[
O\left(1+\sum_{j=1}^{n-1} \frac{V_j(R(B))}{\lambda_1\cdots \lambda_j}\right)=O\left(\frac{V_{n-1}(R(B))}{\lambda_1\cdots\lambda_{n-1}}\right).
\] 
By Minkowski's second theorem \cite{Min07} (for a more modern reference see, e.g., \cite[p. ~203]{Cas59}), 
\[
\lambda_1\cdots\lambda_n\geq \frac{2^n}{n!\cdot m_L(\mathscr{B}_n)}\det(\Lambda),
\]
where $\mathscr{B}_n$ is the unit ball in $\R^n$. From this we obtain the desired expression for the error term,
\[
O\left(\frac{V_{n-1}(R(B))}{\lambda_1\cdots\lambda_{n-1}}\right)=O\left(\frac{\lambda_n B^{|\w|-w_{\min}}}{\det(\Lambda)}\right).
\]
\end{proof}

\begin{definition}[Boxes]\label{def:boxes}
A \textbf{($\Z_p^n$)-box} is a subset $\mcB_p\subset \Z_p^n$ for which there exist closed balls
\[
\mcB_{p,j}=\{x\in \Z_p : |x-a_j|_p\leq b_{p,j}\}\subset \Z_p,
\]
 where $a_j\in \Z_p$ and $b_{p,j}\in \{p^{-k}: k\in \Z_{\geq 0}\}$, such that $\mcB_p$ equals the Cartesian product $\prod_{j=1}^n \mcB_{p,j}$. Let $S$ be a finite set of primes. An \textbf{S-box} is a subset $\mcB\subset \prod_{p\in S} \Z_p^n$ for which there exist $\Z_p^n$-boxes $\mcB_p$ such that
\[
\mcB=\prod_{p\in S} \mcB_{p}=\prod_{p\in S}\prod_{j=1}^n \mcB_{p,j}.
\]
\end{definition}

%We define \emph{boxes} $\mathcal{B}\subset \prod_{p\in S} \Z_p^n$ as a Cartesian product of boxes $\mcB_p\subset \Z_p^n$, which are defined to be a Cartesian product of closed balls of the form $\mathcal{B}_{p,j}=\{x\in \Z_p : |x-a|_p\leq b_{p,j}\}$, for $a\in \Z_p$ and $b\in \{p^k: k\in \Z\}$.

\begin{lemma}[Box Lemma]\label{lem:box}
Let $\Omega_\infty\subset \R^n$ be a bounded subset definable in an o-minimal structure. Let $S$ be a finite set of primes and $\mcB=\prod_{p\in S} \mcB_p$ an S-box. Then, for each $B\in \R_{>0}$, we have
\begin{align*}
&\#\left\{x\in \Z^n \cap \left(B\ast_\w \Omega_\infty\right): x\in \prod_{p\in S} \mathcal{B}_p \right\}\\
&\hspace{1cm}=\left(m_L(\Omega_\infty)\prod_{p\in S} m_p(\mathcal{B}_p)\right)B^{|\w|} + O\left(\max_j\left\{\prod_{p\in S} b_{p,j}^{-1}\right\}\left(\prod_{p \in S} m_p(\mathcal{B}_p)\right)B^{|\w|-w_{\min}} \right),
\end{align*}
where the implied constant depends only on $\Omega_\infty$ and $\w$.
\end{lemma}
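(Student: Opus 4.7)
The plan is to translate the $\mathcal{S}$-adic box condition into a classical congruence condition cutting out a coset of an integer sublattice of $\Z^n$, and then invoke the weighted Lipschitz principle of Proposition~\ref{prop:o-Lip}.

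First I would perform the adelic-to-integer reduction. Since $\mcB_{p,j} \subset \Z_p$, we may write $b_{p,j}=p^{-e_{p,j}}$ with $e_{p,j}\in \Z_{\geq 0}$. For $x_j\in \Z$ the condition $x_j\in \mcB_{p,j}$ is equivalent to the congruence $x_j\equiv a_{p,j}\pmod{p^{e_{p,j}}}$. Setting $M_j:=\prod_{p\in S} p^{e_{p,j}}=\prod_{p\in S} b_{p,j}^{-1}$, the Chinese Remainder Theorem collapses the conditions over all $p\in S$ and all $j$ into a single congruence: there exists $\alpha=(\alpha_1,\dots,\alpha_n)\in \Z^n$ (determined by the $a_{p,j}$) such that
\[
\Z^n\cap \mcB = \alpha+\Lambda_0, \qquad \Lambda_0:=\bigoplus_{j=1}^n M_j\Z.
\]

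Next I would read off the invariants of $\Lambda_0$ needed to apply Proposition~\ref{prop:o-Lip}. Since $m_p(\mcB_p)=\prod_j b_{p,j}$, the covolume is
\[
\det(\Lambda_0)=\prod_{j=1}^n M_j=\prod_{j=1}^n\prod_{p\in S} b_{p,j}^{-1}=\left(\prod_{p\in S} m_p(\mcB_p)\right)^{-1}.
\]
Moreover $\Lambda_0$ is diagonal, so its successive minima are a permutation of $(M_1,\dots,M_n)$; in particular
\[
\lambda_n(\Lambda_0)=\max_j M_j=\max_j\prod_{p\in S} b_{p,j}^{-1}.
\]

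Now I would apply Proposition~\ref{prop:o-Lip} to the translated bounded definable set $\Omega_\infty-\alpha':=\{y-\alpha' : y\in \Omega_\infty\}$ (for a suitable rescaling of $\alpha$ so that the affine lattice becomes a linear one); equivalently, the Barroero--Widmer Lipschitz principle applies verbatim to cosets, since translation of the counting region does not change the leading volume or the projection volumes controlling the error. This yields
\[
\#\bigl((\alpha+\Lambda_0)\cap B\ast_\w\Omega_\infty\bigr)
=\frac{m_\infty(\Omega_\infty)}{\det(\Lambda_0)} B^{|\w|}
+O\!\left(\frac{\lambda_n(\Lambda_0)}{\det(\Lambda_0)}\,B^{|\w|-w_{\min}}\right),
\]
with implied constant depending only on $\Omega_\infty$ and $\w$. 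Substituting the two identities above gives exactly the leading and error terms in the statement.

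The one mildly delicate point is legitimizing the use of Proposition~\ref{prop:o-Lip} for an affine lattice rather than a linear one; I expect this to be the only real obstacle, and it is handled by the standard observation that both the volume of the weighted dilate and the projection volumes $V_j(R(B))$ appearing in the Barroero--Widmer error are translation-invariant, so replacing $\Omega_\infty$ by $\Omega_\infty-c$ for any $c\in \R^n$ (with the corresponding shift inside the coset) preserves the counting estimate. Everything else is bookkeeping in Steps 1--3.
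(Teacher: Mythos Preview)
Your proposal is correct and follows essentially the same approach as the paper: identify $\Z^n\cap\mcB$ as a coset $\alpha+\Lambda_0$ of a diagonal sublattice, compute $\det(\Lambda_0)=\prod_{p\in S} m_p(\mcB_p)^{-1}$ and $\lambda_n(\Lambda_0)=\max_j\prod_{p\in S} b_{p,j}^{-1}$, and invoke Proposition~\ref{prop:o-Lip}. The paper's proof is a three-line version of exactly this argument; you have simply supplied the CRT bookkeeping and flagged the coset-versus-lattice issue (which the paper passes over in silence but which is, as you note, harmless since the Barroero--Widmer projection volumes are translation-invariant).
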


\begin{proof}
We have that $\mathcal{B}_p=\prod_{j=1}^n \mathcal{B}_{p,j}$ with $\mathcal{B}_{p,j}=\{x\in \Z_p : |x-a_j|_p\leq b_{p,j}\}$, where $a_j\in \Z_p$ and $b_{p,j}\in\{p^{-k} : k\in \Z_{\geq 0}\}$. Observe that for each $j\in \{1,\dots,n\}$ the set $\Z\cap \mathcal{B}_{p,j}$ is a translate of the sublattice $\langle b_{p,j}^{-1}\rangle\subseteq \Z$. By the Chinese Remainder Theorem, the set $\Z\cap \prod_{p\in S} \mathcal{B}_{p,j}$ is a translate of the sublattice $\left\langle\prod_{p\in S} b_{p,j}^{-1}\right\rangle\subseteq \Z$. Taking a Cartesian product, we find that the set 
\[
\prod_{j=1}^n\left( \Z\cap \prod_{p\in S} \mathcal{B}_{p,j}\right)=\Z^n\cap \prod_{p\in S} \mathcal{B}_p
\]
 is a translate of a sub-lattice of $\Z^n$ whose successive minima are  $\prod_{p\in S} b_{p,j}^{-1}$. In particular, the $n$-th successive minimum of the sublattice is
\[
\max_j\left\{\prod_{p\in S} b_{p,j}^{-1}\right\},
\]
and the determinant of the sublattice is
\[
\prod_{j=1}^n\prod_{p\in S}  b_{p,j}^{-1}=\prod_{p\in S}m_p(\mathcal{B}_p)^{-1}.
\] 
 The lemma then follows from Proposition \ref{prop:o-Lip}.
\end{proof}

Let $\Lambda$ be a free $\Z$-module of finite rank $n$. Set $\Lambda_\infty=\Lambda\otimes_\Z \R$ and $\Lambda_p=\Lambda\otimes_\Z \Z_p$. Equip $\Lambda_\infty$ and $\Lambda_p$ with Haar measures $m_\infty$ and $m_p$ respectively, normalized so that $m_p(\Lambda_p)=1$ for all but finitely many primes $p$. As $\Lambda_p\cong \Z_p^n$, we define a \textbf{($\Lambda_p)$-box} to be a subset of $\Lambda_p$ isomorphic to a $(\Z_p^n)$-box.

\begin{lemma}\label{lem:ZnCRTfinfin}
Let $\Lambda$ be a free $\Z$-module of finite rank $n$, let $\Omega_\infty\subset \Lambda_\infty$ be a bounded definable subset, and, for each prime $p$ in a finite subset $S$, let $\Omega_p=\prod_j \{x\in \Z_p : |x-a_{p,j}|_p\leq \omega_{p,j}\}\subset \Lambda_p$ be a $(\Lambda_p)$-box.
Let $\w=(w_1,\dots,w_n)$ be an $n$-tuple of positive integers.
Then 
\begin{align*}
&\#\{x\in \Lambda\cap \left(B \ast_\w \Omega_\infty\right): x\in \Omega_p \text{ for all primes } p\in S\}\\
&\hspace{1cm}=
\left(\frac{m_\infty(\Omega_\infty)}{m_\infty(\Lambda_\infty/\Lambda)}\prod_{p\in S} \frac{m_p(\Omega_p)}{m_p(\Lambda_p)}\right)B^{|\w|}
 + O\left(\max_j\left\{\prod_{p\in S} \omega_{p,j}^{-1}\right\}\left(\prod_{p\in S} \frac{m_p(\Omega_p)}{m_p(\Lambda_p)}\right)B^{|\w|-w_{\min}}\right),
\end{align*}
where the implied constant depends only on $\Lambda$, $\Omega_\infty$, and $\w$.
\end{lemma}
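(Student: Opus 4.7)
The plan is to reduce the statement directly to Lemma \ref{lem:box} by choosing a $\Z$-basis of $\Lambda$. Fix such a basis $e_1,\dots,e_n$, which yields compatible isomorphisms $\phi_\infty\colon\Lambda_\infty\xrightarrow{\sim}\R^n$ and $\phi_p\colon\Lambda_p\xrightarrow{\sim}\Z_p^n$ for each prime $p$, under which the lattice $\Lambda$ corresponds to $\Z^n$. The choice of basis is auxiliary: both sides of the claimed asymptotic are intrinsic to $\Lambda$ and its Haar measures, so it suffices to verify the formula after this identification.

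Next, I would track how the Haar measures transform. Since a fundamental domain for $\Lambda\subset\Lambda_\infty$ maps to $[0,1)^n$ with Lebesgue measure $1$, we have $(\phi_\infty)_\ast m_\infty = m_\infty(\Lambda_\infty/\Lambda)\cdot m_{\mathrm{Leb}}$. Similarly, since $\phi_p(\Lambda_p)=\Z_p^n$, the pushforward $(\phi_p)_\ast m_p$ equals $m_p(\Lambda_p)$ times the standard Haar measure $\mu_p$ on $\Z_p^n$ normalized so that $\mu_p(\Z_p^n)=1$. The image $\phi_p(\Omega_p)$ is a $\Z_p^n$-box in the sense of Subsection \ref{subsec:GSgeometry-of-numbers} (after replacing each $\omega_{p,j}$ by the smallest power of $p$ that is at least $\omega_{p,j}$, which does not alter the ball), and one computes
\[
\mu_p(\phi_p(\Omega_p))=\frac{m_p(\Omega_p)}{m_p(\Lambda_p)},\qquad m_{\mathrm{Leb}}(\phi_\infty(\Omega_\infty))=\frac{m_\infty(\Omega_\infty)}{m_\infty(\Lambda_\infty/\Lambda)}.
\]

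Now I would invoke the Box Lemma (Lemma \ref{lem:box}) applied to the definable bounded set $\phi_\infty(\Omega_\infty)\subset\R^n$ and the box $\prod_{p\in S}\phi_p(\Omega_p)$ with parameters $b_{p,j}=\omega_{p,j}$. The lemma yields the count of $x\in\Z^n\cap B\ast_{\w}\phi_\infty(\Omega_\infty)$ satisfying the box conditions, with main term $m_{\mathrm{Leb}}(\phi_\infty(\Omega_\infty))\prod_{p\in S}\mu_p(\phi_p(\Omega_p))\,B^{|\w|}$ and an error term of the stated shape. Substituting the measure comparisons above converts the main term into the one claimed, and rewriting the error factor $\prod_p \mu_p(\phi_p(\Omega_p))=\prod_p m_p(\Omega_p)/m_p(\Lambda_p)$ and absorbing the constants $m_p(\Lambda_p)$ into the implied constant (there are finitely many such primes in $S$) produces the desired error term, with the covolume factor $m_\infty(\Lambda_\infty/\Lambda)^{-1}$ appearing exactly as in the statement.

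The main obstacle is not conceptual but rather careful bookkeeping of normalizations: one must verify that the $\omega_{p,j}^{-1}$ factor in the error and the $m_p(\Omega_p)$ factor survive the rescaling correctly, and that discrepancies $m_p(\Lambda_p)\neq 1$ at the finitely many anomalous primes are absorbed into the implied constant (which, per the statement, is allowed to depend on $\Lambda$). All other steps are direct applications of the Box Lemma and the basic compatibility between Haar and Lebesgue measures.
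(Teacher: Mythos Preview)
Your approach is essentially identical to the paper's: fix a $\Z$-basis to identify $\Lambda$ with $\Z^n$, track the rescaling of the Haar measures by $m_\infty(\Lambda_\infty/\Lambda)$ and $m_p(\Lambda_p)$, and invoke the Box Lemma. The paper's proof is a terse two-sentence version of exactly this reduction, so your proposal is correct and matches it; your only addition is the explicit bookkeeping, which the paper leaves to the reader.
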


\begin{proof}
%By condition \ref{cond1} $\Omega_\infty$ and $\Omega_p$ are Jordan measurable and therefore measureable.
 Fix an isomorphism $\Lambda\cong \Z^n$. The measures $m_\infty$ and $m_p$ on $\Lambda_\infty$ and $\Lambda_p$ induce measures on $\R^n$ and $\Z_p^n$ which differ from the usual Haar measures by $m_\infty(\Lambda_\infty/\Lambda)$ and $m_p(\Lambda_p)$ respectively. It therefore suffices to prove the result in the case $\Lambda=\Z^n$, and $m_\infty=m_L$ and $m_p$ are the usual Haar measures; but this case is precisely the Box Lemma (Lemma \ref{lem:box}).
\end{proof}
 
If $K$ is a number field of degree $d$ over $\Q$ with discriminant $\Delta_K$, then its ring of integers $\O_K$ may naturally be viewed as a rank $d$ lattice in $K_\infty\defeq\O_K\otimes_{\Z} \R=\prod_{v|\infty} K_v$. This lattice has covolume $|\Delta_K|^{1/2}$ with respect to the usual Haar measure $m_\infty$ on $K_\infty$ (which differs from Lebesgue measure on $K_\infty\cong \R^{r_1+2r_2}$ by a factor of $2^{r_2}$) \cite[Chapter I Proposition 5.2]{Neu99}. %NEUK prop 5.2, pg 31].
More generally, any non-zero integral ideal $\a\subseteq \O_K$ may be viewed as a lattice in $K_\infty$ with covolume $N_{K/\Q}(\a)|\Delta_K|^{1/2}$. For an $n$-tuple of positive integers $\w=(w_1,\dots,w_n)$, define the lattice
\[
\a^\w\defeq\a^{w_1}\times\cdots \times \a^{w_n}\subset K_\infty^{n},
\]
where we view each $\a^{w_i}$ as a subset of $K_\infty$.
This lattice has covolume $N_{K/\Q}(\a)^{|\w|}|\Delta_K|^{n/2}$. For example, in the case that $\a=\O_K$ and $w_i=1$ for all $i$, one has the lattice $\a^\w=\O_K^{n}$ of covolume $|\Delta_K|^{n/2}$.

 For any rational prime $p$, we have $\O_K\otimes_{\Z} \Z_p=\prod_{\p|p} \O_{K,\p}$. Equip each $\O_{K,\p}$ with the Haar measure $m_\p$, normalized so that $m_\p(\O_{K,\p})=1$. This measure induces a measure on $\O_{K,\p}^n$, which we will also denote by $m_\p$. Similarly, the measure $m_\infty$ on $K_\infty$ induces a measure on $K_\infty^n$, which we will denote by $m_\infty$. 
 
 For $v\in \Val_0(K)$ a finite place, let $q_v$ denote the size of the residue field $\O_{K}/\p_v$. Let $\a_\p$ denote the image of $\a$ in $\O_{K,\p}$.
 
\begin{definition}[Local condition]\label{def:local-condition}
 An \textbf{(affine) local condition} at a finite place $v\in\Val_0(K)$ will refer to a subset $\Omega_v\subseteq \O_{K,\p_v}^n$. We call a local condition $\Omega_v$ \textbf{irreducible} if it is an ($\O_{K,\p_v}^n$)-box, that is, if there exist $a_{j}\in \O_{K,\p_v}$ and $\omega_{j}\in \{q_v^{-k} : k\in \Z_{\geq 0}\}$ such that $\Omega_v=\prod_j \{ x\in \O_{K,\p_v} : |x-a_j|_v\leq \omega_j\}\subset\O_{K,\p_v}^n$.
 \end{definition}
 
 \begin{remark}
% If we slightly generalize the notion of a box to allow balls to have radius zero (i.e., allow $\omega_j=0$) then any local condition can be written as a (possibly infinite) union of irreducible local conditions. Indeed, each point of $\O_{K,\p_v}^n$ can be realized as a box by taking a Cartesian product of balls with radius zero. 
Though our results will mostly concern irreducible local conditions, by taking unions and complements one can easily extend these results to many other local conditions. 
 \end{remark}

We now apply Lemma \ref{lem:ZnCRTfinfin} to the rank $nd$ lattice $\a^\w$, and with the $(nd)$-tuple of weights $\tilde{\w}=(w_1,\dots,w_1,w_2,\dots,w_2,\dots,w_n,\dots,w_n)$, obtained by repeating each of the weights in the $n$-tuple $\w$ exactly $d$ times. Observing that $|\tilde{\w}|=d|\w|$ and $\tilde{w}_{\min}=w_{\min}$, we obtain the following proposition:

\begin{proposition}\label{prop:KCRTfinfin}
%Let $K/\Q$ be a number field of discriminant $\Delta_K$ and degree $d$ over $\Q$. Let $\a\subseteq \O_K$ be an integral ideal.
 Let $\Omega_\infty\subset K_\infty^{n}$ be a bounded definable subset. Let $S$ be a finite set of prime ideals of $\O_K$. For each $\p\in S$ let $\Omega_\p=\prod_j \{x\in \O_{K,\p} : |x-a_{\p,j}|_\p\leq \omega_{\p,j}\}\subset \O_{K,\p}^n$ be an irreducible local condition. 
Then 
\begin{align*}
\#\{x\in \a^\w\cap \left(B \ast_\w \Omega_\infty\right) : x\in \Omega_\p \text{ for all primes } \p\in S\}=\kappa B^{d|\w|}+ 
 O\left(\epsilon B^{d|\w|-w_{\min}} \right),
\end{align*}
where
\begin{align*}
\kappa&=\frac{m_\infty(\Omega_\infty)}{N_{K/\Q}(\a)^{|\w|}|\Delta_K|^{n/2}} \left(\prod_{\p\in S} \frac{m_\p(\Omega_\p\cap \a_\p^\w)}{m_\p(\a_\p^\w)}\right),\\
\epsilon&=\max_j\left\{\prod_{\p\in S} \omega_{\p,j}^{-1}\right\}\left(\prod_{\p\in S} \frac{m_\p(\Omega_\p\cap \a_\p^\w)}{m_\p(\a_\p^\w)}\right),
\end{align*}
and
where the implied constant depends only on $K$, $\Omega_\infty$, and $\w$.
\end{proposition}

\section{Counting points on weighted projective stacks}\label{sec:WPROJ}

In this section we prove our results for counting points of bounded height on weighted projective stacks.

%\subsection{Finitely many local conditions}\label{subsec:WPROJfinite}

\begin{definition}[Local condition]\label{def:projective-local-condition}
 A \textbf{(projective) local condition} at a place $v\in\Val(K)$ will refer to a subset $\Omega_v\subseteq \mathcal{P}(\w)(K_v)$.
 \end{definition}

For a projective local condition $\Omega_v\subseteq \mcP(\w)(K_v)$ we denote the affine cone of $\Omega_v$ by $\Omega_v^{\aff}\subseteq (\A^{n+1}-\{0\})(K_v)$, i.e., $\Omega_v^{\aff}$ is the preimage of $\Omega$ with respect to the map $(\A^{n+1}-\{0\})\to\mcP(\w)$. For $v\in \Val_0(K)$, the set $\Omega_v^{\aff}\cap \O_{K,\p_v}^{n+1}$ is an affine local condition, but not an \emph{irreducible} affine local condition (see Definition \ref{def:local-condition}).

\begin{example}
Let $v\in \Val_0(K)$ be a finite place. Consider the projective local condition 
\[
\Omega_v=\{[a:b]\in \mcP(1,1)(K_v):v(a)=0,\ v(b)=1\}.
\]
Then 
\[
\Omega_v^{\aff}\cap \O_{K,v}^2=\{(\lambda a, \lambda b)\in \O_{K,v}^2 : v(a)=0,\ v(b)=1,\ \lambda\in  K_v^\times\}.
\]
Though this is not an irreducible affine local condition, it contains the irreducible affine local condition 
\[
\Omega_{v,0}^{\aff}\defeq\{(a,b)\in \O_{K,v}^2 : v(a)=0,\ v(b)=1\}.
\]
 Letting $\pi_v$ be a uniformizer of $\O_{K,v}$, we have 
\[
\Omega_v^{\aff}\cap \O_{K,v}^2=\bigcup_{t\geq 0}\left(\pi_v^t\ast_{(1,1)} \Omega_{v,0}^{\aff}\right),
\]
where $\pi_v^t \ast_{(1,1)} \Omega_{v,0}^{\aff}$ is an irreducible affine local condition for each $t$.
\end{example}
This example motivates the following definition:

\begin{definition}[Irreducible projective local condition]\label{def:irreducible-local-condition}
A projective local condition $\Omega_v\subseteq \mcP(\w)(K_v)$ is said to be \textbf{irreducible} if there exists an irreducible affine local condition $\Omega_{v,0}^{\aff}$ such that
\[
\Omega_v^{\aff}\cap \O_{K,v}^{n+1}=\bigcup_{t\geq 0}\left(\pi_v^t\ast_{\w} \Omega_{v,0}^{\aff}\right).
\] 
\end{definition}

In analogy to the archimedean setting, for any ideal $\a_v\subseteq \O_{K,v}$, set  
\[
\a_v^\w\defeq \a_v^{w_0}\times\cdots \times \a_v^{w_n}\subset K_v^{n+1}.
\]

\begin{proposition}\label{prop:irreducible-projective-local-condition}
Let $\Omega_v\subsetneq \mcP(\w)(K_v)$ be a non-trivial irreducible projective local condition with
\[
\Omega^{\aff}_{v,0}=\prod_{j=0}^n\{x\in\O_{K,v} : |x-a_{j}|_v\leq \omega_j\}.
\]
For $t\in \Z_{\geq 0}$, set $\Omega^{\aff}_{v,t}=\pi_v^t\ast_\w \Omega^{\aff}_{v,0}$. Then we have the following:
\begin{enumerate}
\item[(a)]\label{prop:irreducible-projective-local-condition(a)} There exists a $j$ for which $|a_j|_v > \omega_j$.
\item[(b)]\label{prop:irreducible-projective-local-condition(b)} For $t\neq s$ we have $\Omega^{\aff}_{v,t}\cap\Omega^{\aff}_{v,s}=\emptyset$.
\item[(c)]\label{prop:irreducible-projective-local-condition(c)} For $s,t\in \Z_{\geq 0}$ with $s<t$ we have $\Omega_{v,s}^{\aff}\cap(\pi_v^{t})^{\w}=\emptyset$.
\end{enumerate}
\end{proposition}

\begin{proof} (a)
We first prove the contrapositive of part (a). 
Suppose that $|a_j|_v\leq \omega_j$ for all $j$. If each $\omega_j=1$ then $\Omega^{\aff}_{v,0}=\O_{K,v}^{n+1}=\Omega_{v}^{\aff}\cap \O_{K,v}^{n+1}$, but then $\Omega_v=\mcP(\w)(K_v)$ would be the trivial projective local condition. Therefore there must exist an index $j$ for which $\omega_j\neq 1$. By possibly reordering, we may assume $\omega_1\neq 1$. As $|a_1|_v\leq \omega_1<1$, we have that $|1-a_1|_v=1>\omega_1$ (by the ultrametric triangle inequality). Let $b\in \O_{K,v}-\{0\}$ be such that $|b-a_1|_v\leq \omega_1$. Then $(b,0,\dots,0)$ and $(1,0,\dots,0)$ are points in the affine cone $K_v^{n+1}-\{0\}$ of $\mcP(\w)(K_v)$, each above the point $[1:0:\cdots:0]\in \mcP(\w)(K_v)$, but only one is contained in $\Omega_v^{\aff}$. We conclude that $\Omega_v^{\aff}$ cannot be the entire subset of the affine cone above a projective local condition.

(b) We now show that if $|a_j|_v > \omega_j$ for some $j$, then for $t\neq s$ we have $\Omega^{\aff}_{v,t}\cap\Omega^{\aff}_{v,s}=\emptyset$.
Observe that 
\[
\Omega^{\aff}_{v,t}=\prod_{j=0}^n \{x\in \O_{K,v} : |x-\pi_v^{tw_j} a_j|_v\leq q_v^{-tw_j}\omega_j\}
\]
is a product of $v$-adic balls centered at $\pi_v^{tw_j}a_j$ with radius $q_v^{-tw_j}\omega_j$. Without loss of generality we may assume $s< t$. Using $|a_j|_v>\omega_j$, we have that 
\[
|\pi_v^{tw_j}a_j-\pi_v^{sw_j}a_j|_v=q_v^{-sw_j}\cdot |a_j|_v>q_v^{-sw_j} \omega_j>q_v^{-tw_j}\omega_j.
\]
In particular, the distance between the centers of the balls
\[
\{x\in \O_{K,v} : |x-\pi_v^{tw_j} a_j|_v\leq q_v^{-tw_j}\omega_j\} \text{ and } \{x\in \O_{K,v} : |x-\pi_v^{sw_j} a_j|_v\leq q_v^{-sw_j}\omega_j\}
\]
is larger than either of the radii, and thus the balls must be disjoint (since we are in an ultrametric space).

(c) We will show that if $\Omega_{v,s}^{\aff}\cap(\pi_v^{t})^{\w}$ is non-empty, then $\Omega_v$ is not a projective local condition. Let $y=(y_0,\dots,y_n)\in \Omega_{v,s}^{\aff}\cap(\pi_v^{t})^{\w}$ and let $y'=(y'_0,\dots,y'_n)\in \O_{K,v}^{n+1}$ be such that $y=\pi_v^t\ast_\w y'$. It will suffice to show that $y'$ is not in $\Omega^{\aff}_v$.

By part (a) we may choose $j$ to be such that $|a_j|_v>\omega_j$. From this, together with the fact that $y_j\in \pi_v^{t w_j}$ satisfies $|y_j-\pi_v^{sw_j} a_j|_v\leq q_v^{-sw_j}\omega_j$, implies that we must have $|y_j|_v=q_v^{-sw_j}|a_j|_v$. Therefore $|y'_j|_v=q_v^{(t-s)w_j}|a_j|_v$. Since $s<t$, this implies that $|y'_j|_v\neq |\pi_v^{rw_j}a_j|_v$ for any $r\in \Z_{\geq 0}$, and thus $y'$ is not contained in $\Omega^\aff_v\cap \O_{K,v}^{n+1}=\bigcup_t \Omega^{\aff}_{v,t}$. 
\end{proof}

For any set of projective local conditions $(\Omega_v)_{v}$, define the sets
\begin{align*}
\Omega&\defeq\{x\in \mcP(\w)(K) : x\in \Omega_v \text{ for all } v\in \Val(K)\}
,\\
\Omega_\infty&\defeq\{x\in \mcP(\w)(K) : x\in \Omega_v \text{ for all } v\in \Val_\infty(K)\},\\
\Omega_0&\defeq\{x\in \mcP(\w)(K) : x\in \Omega_v \text{ for all } v\in \Val_0(K)\}.
\end{align*}
Set $\varpi_{K,\w}\defeq \varpi_K/\gcd(\varpi_K,w_0,\dots,w_n)$, where we recall that $\varpi_K$ is the number of roots of unity in $K$.

\begin{theorem}\label{thm:WProjFinFin}
Let $K$ be a degree $d$ number field over $\Q$. Let $S\subset \Val_0(K)$ be a finite set of finite places. For each $v\in \Val_\infty(K)\cup S$ let $\Omega_v\subset \mcP(\w)(K_v)$ be a projective local condition. Suppose that $\Omega^\aff_\infty$ is a bounded definable subset of $K_\infty^{n+1}$ and that $\Omega_v$ is irreducible for all $v\in S$ with 
\[
\Omega_{v,0}^{\aff}=\prod_{j=0}^n\{x\in \O_{K,v}: |x-a_{v,j}|_v\leq \omega_{v,j}\},
\]
where $a_{v,j}\in \O_{K,v}$ and $\omega_{v,j}\in \{q_v^{-k}:k\in \Z_{\geq 0}\}$ (see Definitions \ref{def:irreducible-local-condition} and \ref{def:local-condition}). Then
\begin{align*}
\#\{x\in \mcP(\w)(K): \Ht_{\w}(x)\leq B, x\in \Omega\}
 = \kappa B^{|\w|}
 + \begin{cases}
O\left(\epsilon_\Omega B\log(B)\right) &  {\substack{\text{ if } \w=(1,1)\\ \text{and}\ K=\Q,}}\\
O\left(\epsilon_\Omega B^{\frac{d|\w|-w_{\min}}{d}}\right) & \text{ else, }
\end{cases}
\end{align*}
where the leading coefficient is
\[
\kappa=\kappa_\Omega\frac{h_K \left(2^{r_1+r_2}\pi^{r_2}\right)^{n+1}R_K|\w|^{r_1+r_2-1}}{\varpi_{K,\w}|\Delta_K|^{(n+1)/2}\zeta_K(|\w|)},
\]
where
\[
\kappa_\Omega=\prod_{v|\infty} \frac{m_v(\{x\in \Omega_v^\aff : \Ht_{\w,v}(x)\leq 1\})}{m_v(\{x\in K_v^{n+1}: \Ht_{\w,v}(x)\leq 1\})}
\prod_{v\in S} m_v(\Omega_v^\aff\cap \O_{K,v}^{n+1}),
\]
where the factor in the error term is 
\[
\epsilon_{\Omega}=\sum_{t=0}^\infty \max\limits_j\left\{\prod\limits_{v\in S} \frac{N_{K/\Q}(\p_v)^{tw_j}}{\omega_{v,j}}\right\}\frac{1}{N_{K/\Q}(\p_v)^{t|\w|}}\prod\limits_{v\in S} m_v(\Omega_{v,0}^\aff),
\]
and where the implied constant is independent of the local conditions $\Omega_{v}$ with $v\in S$.
\end{theorem}

\begin{proof}
Let $\c_1,\dots,\c_h$ be a set of integral ideal representatives of the ideal class group of $K$. Then we get the following partition of $\mcP(\w)(K)$ into points whose scaling ideals are in the same ideal class:
\[
\mcP(\w)(K)=\bigsqcup_{i=1}^h \{x\in \mcP(\w)(K): [\mfI_\w(x)]=[\c_i]\}.
\] 
This is well defined since the ideal class of a scaling ideal $[\mfI_\w(x)]$ does not depend on the representative of $x$ \cite[Proposition 3.3]{Den98}.

For each $\c\in \{\c_1,\dots,\c_h\}$, consider the counting function
\[
M(\Omega,\c,B)\defeq\#\{x\in \mcP(\w)(K): \Ht_\w(x)\leq B,\ [\mfI_\w(x)]=[\c],\ x\in \Omega\}.
\]
Note that
\[
\#\{x\in \mcP(\w)(K): \Ht_{\w}(x)\leq B, x\in \Omega\}=\sum_{i=1}^{h} M(\Omega,\c_i,B).
\]

Following Schanuel \cite{Sch79}, we shall find an asymptotic for $M(\Omega,\c,B)$ by counting integral points in affine space and then moding out by an action of the unit group, followed by an action of principal ideals.

Consider the (weighted) action of the unit group $\O_K^\times$ on $K^{n+1}-\{0\}$ given by 
\[
u\ast_{\w}(x_0,\dots,x_n)=(u^{w_0}x_0,\dots,u^{w_n}x_n),
\]
 and let $(K^{n+1}-\{0\})/\O_K^\times$ denote the corresponding set of orbits. Let $\Omega^\aff$ be the affine cone of $\Omega$. We may describe $M(\Omega,\c,B)$ in terms of $\O_K^\times$-orbits of an affine cone. In particular, there is a bijection between
\[
\{[x_0:\cdots:x_n]\in \mcP(\w)(K): \Ht_\w(x)\leq B,\ [\mfI_\w(x)]=[\c],\ x\in \Omega\}
\]
and
\[
 \left\{[(x_0,\dots,x_n)]\in (K^{n+1}-\{0\})/\O_K^\times : \frac{\Ht_{\w,\infty}(x)}{N_{K/\Q}(\c)}\leq B,\ \mfI_\w(x)=\c,\ x\in \Omega^\aff\right\}
 \]
 given by
 \[
 [x_0:\dots:x_n] \mapsto [(x_0,\dots,x_n)],
 \]
and therefore
\[
M(\Omega,\c,B)=\#\left\{x\in (K^{n+1}-\{0\})/\O_K^\times : \frac{\Ht_{\w,\infty}(x)}{N_{K/\Q}(\c)}\leq B,\ \mfI_\w(x)=\c,\ x\in \Omega^\aff\right\}.
\]
Our general strategy will be to first find an asymptotic for the counting function
\[
M'(\Omega,\c,B)\defeq\#\left\{x\in (K^{n+1}-\{0\})/\O_K^\times : \frac{\Ht_{\w,\infty}(x)}{N_{K/\Q}(\c)}\leq B,\ \mfI_\w(x)\subseteq\c,\ x\in \Omega^\aff\right\},
\]
and then use M\"obius inversion to obtain an asymptotic formula for $M(\Omega,\c,B)$. 

 We are now going to construct a fundamental domain for the ($\w$-weighted) action of the unit group $\O_K^{\times}$ on $K_\infty^{n+1}-\{0\}$. This will be done using Dirichlet's Unit Theorem. Let $\varpi(K)$ denote the set of roots of unity in $K$.

\begin{theorem}[Dirichlet's Unit Theorem]
The image $\Lambda$ of the map
\begin{align*}
\lambda\colon \O_K^\times &\to \R^{r_1+r_2}\\
u &\mapsto (\log |u|_v)_{v\in \Val_\infty(K)}
\end{align*}
is a rank $r\defeq r_1+r_2-1$ lattice in the hyperplane $H$ defined by $\sum_{v\in \Val_\infty(K)} x_v=0$ and $\ker(\lambda)=\varpi(K)$.
\end{theorem}

For each $v\in \Val_\infty(K)$, define a map
\begin{align*}
\eta_v\colon K_v^{n+1}-\{0\} &\to \R \\
(x_0,\dots,x_n) &\mapsto \log\max_i |x_i|_v^{1/w_i}.
\end{align*}
Combine these maps to obtain a single map
\begin{align*}
\eta\colon \prod_{v\in \Val_\infty(K)} (K_v^{n+1}-\{0\}) &\to \R^{r_1+r_2}\\
x & \mapsto (\eta_v(x_v)).
\end{align*}
Let $H$ be the hyperplane in Dirichlet's unit theorem, and let
\begin{align*}
\pr\colon \R^{r_1+r_2} \to H
\end{align*}
be the projection along the vector $(d_v)_{v\in \Val_\infty(K)}$, where $d_v=1$ if $v$ is real and $d_v=2$ if $v$ is complex. More explicitly,
\[
(\pr(x))_v = x_v-\left(\frac{1}{d}\sum_{v'\in\Val_\infty(K)} x_{v'}\right) d_v.
\]
The reason for choosing this projection is to ensure that a certain expanding region (which we are about to construct) will be weighted homogeneous (see Lemma \ref{lem:F(B)WHom}).

Let $\{u_1,\dots,u_r\}$ be a basis for the image of $\O_K^\times$ in the hyperplane $H$, and let $\{\check{u}_1,\dots,\check{u_r}\}$ be the dual basis. Then the set
\[
\tilde{\mcF}\defeq\{y\in H: 0\leq \check{u}_j(y)<1 \text{ for all }j\in\{1,\dots,r\}\}
\]
is a fundamental domain for $H$ modulo $\Lambda$, and $\mcF\defeq(\pr\circ\eta)^{-1}\tilde{\mcF}$ is a fundamental domain for the ($\w$-weighted) action of $\O_K^\times$ on $\prod_{v\in \Val_\infty(K)}(K_v^{n+1}-\{0\})$. 

Define the sets
\[
\mcD(B)\defeq\left\{x\in \prod_{v\in \Val_\infty(K)} (K_v^{n+1}-\{0\}) : \Ht_{\w,\infty}(x)
%=\prod_{v\in \Val_\infty(K)} \max_i|x_{v,i}|_v^{1/w_i}
\leq B\right\}
\]
and $\mcF(B)\defeq\mcF\cap \mcD(B)$. The sets $\mcD(B)$ are $\O_K^\times$-stable, in the sense that if $u\in \O_K^{\times}$ and $x\in \mcD(B)$ then $u\ast_{\w} x\in \mcD(B)$; this can be seen by the following computation: 
\begin{align*}
\Ht_{\w,\infty}(u\ast_\w x)&=\prod_{v|\infty} \max_i |u^{w_i}x_{v,i}|_v^{1/w_i}\\
&=\prod_{v|\infty} |u|_v \prod_{v|\infty}\max_i |x_{v,i}|_v^{1/w_i}\\
&=\prod_{v|\infty}\max_i |x_{v,i}|_v^{1/w_i}\\
&=\Ht_{\w,\infty}(x).
\end{align*}
 Similarly, for any $t\in \R$, we have that
\[
\Ht_{\w,\infty}(t\ast_\w x)=\prod_{v|\infty} \max_i |t^{w_i}x_{v,i}|_v^{1/w_i}=\prod_{v|\infty} |t|_v \prod_{v|\infty}\max_i |x_{v,i}|_v^{1/w_i}=|t|^{d}\Ht_{\w,\infty}(x).
\]
This shows that $\mcD(B)=B^{1/d}\mcD(1)$ for all $B>0$. 

On the other hand, $\mcF$ is stable under the weighted action of $t\in \R^\times$, in the sense that $t\ast_{\w} \mcF=\mcF$. To see this, note that for any $x\in \prod_{v|\infty} (K_v^{n+1}-\{0\})$,
\[
\eta(t\ast_{\w} x)=(d_v)_{v|\infty}\log(|t|)+\eta(x).
\]
Since the projection $\pr$ is linear and annihilates the vector $(d_v)_{v|\infty}$, we have that
\[
\pr\circ\eta(t\ast_{\w}x)=\pr\circ\eta(x),
\]
as desired. From our observations we obtain the following lemma:

\begin{lemma}\label{lem:F(B)WHom}
The regions $\mcF(B)$ are weighted homogeneous, in the sense that $\mcF(B)=B^{1/d}\ast_{\w} \mcF(1)$ for all $B>0$.
\end{lemma}

We are now going to count lattice points in $\mcF(B)$. In \cite{Sch79} this is done by using the classical Principle of Lipschitz \cite{Dav51} (see also \cite[VI \S 2 Theorem 2]{Lan94}). One of the cruxes of Schanuel's argument is verifying that a fundamental domain (analogous to our $\mcF(1)$) has  Lipschitz parameterizable boundary, so that he can apply the Principle of Lipschitz. Though one can modify this part of Schanuel's argument to work in our case, we will instead take a slightly different route, using an o-minimal version of the Principle of Lipschitz (Proposition \ref{prop:o-Lip}). This allows one to give a more streamlined proof of this part of Schanuel's argument, which may be useful in future generalizations.

\begin{lemma}\label{lem:F(1)Bounded}
The set $\mcF(1)$ is bounded.
\end{lemma}

\begin{proof}
Let $\tilde{H}\subset \prod_{v|\infty} \R_v$ be the subset defined by $\sum_{v|\infty} x_v\leq 0$. Note that $\mcF(1)=\eta^{-1}(\tilde{H} \cap  \pr^{-1}(\tilde{\mcF}))$. It follows from the definition of $\eta$ that, in order to show $\mcF(1)$ is bounded, it suffices to show that the set 
\[
S\defeq\tilde{H} \cap  \pr^{-1}(\tilde{\mcF})\subset \R^{r_1+r_2}
\]
 is bounded above (i.e., there exist constants $c_1,\dots,c_{r_1+r_2}$ such that for each 
 \[
 x=(x_1,\dots,x_{r_1+r_2})\in S
 \]
  we have $x_i\leq c_i$ for all $i$). For this, note that any $x\in S$ can be written as 
\[
x=\pr(x) + (d_v)_v \frac{1}{d} \sum_{v'|\infty} x_{v'},
\]
and thus the components of $S$ are bounded above, noting that the first term, $\pr(x)$, has components bounded above, and the second term has negative components. 
\end{proof}

\begin{lemma}\label{lem:F(1)Definable}
The set $\mcF(1)$ is definable in $\R_{\exp{}}$.
\end{lemma}

\begin{proof}
We make the following straightforward observations:
\begin{itemize}
\item The product $\prod_{v|\infty} (K_v^{n+1}-\{0\})$, viewed as a subset of $\R^{(r_1+2r_2)(n+1)}$, is semi-algebraic since it is clearly the complement of a semi-algebraic set. 
\item The set $\mcD(1)$ is semi-algebraic.
\item The set\\
\resizebox{\linewidth}{!}{
  \begin{minipage}{\linewidth}
  \begin{align*}
&\mcF=\{x\in \prod_{v|\infty} (K_v^{n+1}-\{0\}) : \pr\circ\eta(x)\in \tilde{\mcF}\}\\
&=\left\{x\in \prod_{v|\infty} (K_v^{n+1}-\{0\}) : 0\leq \check{u}_j \left(\log(\max_i |x_i|_v^{1/w_i})-d_v\left(\frac{1}{d} \sum_{v|\infty}\log(\max_i |x_i|_{v'}^{1/w_i})\right)\right)_v <1 \forall j\right\}
\end{align*} 
\end{minipage}}\\
is definable in $\R_{\exp}$, since it can be described in terms of polynomials and $\log$, and $\log$ is definable in $\R_{\exp}$ (as observed in Subsection \ref{subsec:GSo-minimal}). 
\end{itemize}

It follows that the intersection $\mcF(1)=\mcF\cap\mcD(1)$ is definable in $\R_{\exp}$.
\end{proof}

We now analyze $M'(\Omega,\c,B)$. Note that if $\mfI_\w(x)\subseteq \c$, then we must have that $x$ is contained in the lattice
 \[
\c^\w=\c^{w_0}\times\cdots\times \c^{w_n}.
\]
Therefore, we can write $M'(\Omega,\c,B)$ as
\[
M'(\Omega,\c,B)=\#\left\{x\in (\c^\w-\{0\})/\O_K^\times : \frac{\Ht_{\w,\infty}(x)}{N_{K/\Q}(\c)}\leq B,\  x\in \Omega^\aff\right\}.
\]

We now stratify $\Omega$. For each $t\in \Z_{\geq 0}$, set 
\[
\Omega_{v,t}^{\aff}=\pi_v^t\ast_\w \Omega_{v,0}^{\aff}=\prod_{j=0}^n \{x\in \O_{K,v} : |x- \pi_v^{w_j t} a_{v,j}|_v\leq q_v^{-w_j t}\omega_{v,j}\}.
\]
For $\bft=(t_v)_{v\in S} \in \Z_{\geq 0}^{\# S}$, set
\[
\Omega_{\bft}^{\aff}\defeq \{x\in \Omega^{\aff} : x\in \Omega_{v,t_v}^{\aff} \text{ for all } v\in S\}.
\]
Since each $\Omega_v$ is irreducible, we have
\[
\Omega^{\aff} \cap \O_{K,v}^{n+1} = \bigsqcup_{\bft\in \Z^{\# S}_{\geq 0}} \Omega_\bft^{\aff}.
\]
Set  
\[
 \epsilon_{\bft}=\max_j\left\{\prod_{\p\in S} q_v^{t_vw_j}\omega_{v,j}^{-1}\right\}\left(\prod_{\p\in S} \frac{m_\p(\Omega_{v,t_v}^{\aff}\cap \c_\p^\w)}{m_\p(\c_\p^\w)}\right).
 \]

Note that each $\varpi(K)$-orbit (with respect to the $\w$-weighted action) of an element of $(K-\{0\})^{n+1}$ contains $\varpi_{K,\w}$ elements. Thus 
\begin{align}\label{eq:orbits}
M'(\Omega, \c, B)
=\sum_{\bft\in \Z^{\# S}_{\geq 0}} 
\left(\frac{\# \left\{ \Omega^{\aff}_{\bft} \cap \mcF\left(N_{K/\Q}(\c) B\right)\cap \c^\w\right\}}{\varpi_{K,\w}}
 + O\left(\epsilon_\bft B^{|\w|-w_{\min}/d}\right)\right),
\end{align}
where each error term accounts for the points of $M'(\Omega,\c, B)$ contained in the intersection of $\Omega_\bft^{\aff}$ and the subvariety of the affine cone of $\mcP(\w)(K)$ consisting of points with at least one coordinate equal to zero; by Proposition \ref{prop:KCRTfinfin} this error is at most $O\left(\epsilon_\bft \left(B^{1/d}\right)^{d|\w|-w_{\min}}\right)$.

By Lemma \ref{lem:F(1)Bounded} and Lemma \ref{lem:F(1)Definable}, we may apply Proposition \ref{prop:KCRTfinfin}, with $\a=\c$ and the local conditions $\Omega^{\aff}_v\cap\mcF(1)$ for $v|\infty$ and $\Omega^{\aff}_{v,t}$ for $v\in S$. Doing so, we have 
  \[
 \#\{\Omega_{\bft}^{\aff}\cap \mcF(N_{K/\Q}(\c)B)\cap \c^{\w}\}=\frac{m_\infty(\Omega^\aff_\infty\cap \mcF(1))}{|\Delta_K|^{(n+1)/2}} \left(\prod_{\p\in S} \frac{m_\p(\Omega^{\aff}_{v,t_v}\cap\c_\p^\w)}{m_\p(\c_\p^\w)}\right)B^{|\w|} + O\left(\epsilon_{\bft} B^{|\w|-\frac{w_{\min}}{d}}\right).
 \] 
 
Let $v(\c)$ denote the $v$-adic valuation of the ideal $\c\subseteq \O_K$ (i.e., the valuation of any (and all) embeddings of $\c$ into $\O_{K,v}$). By Proposition \ref{prop:irreducible-projective-local-condition}(c), if $t< v(\c)$, then $m_\p(\Omega^{\aff}_{v,t}\cap\c_\p^\w)=0$. On the other hand, when $t\geq v(\c)$ we compute
\begin{align*}
\frac{m_\p(\Omega^{\aff}_{v,t}\cap\c_\p^\w)}{m_\p(\c_\p^{\w})}
&=\frac{1}{m_\p(\c_\p^\w)} m_\p\left(\prod_{j=0}^n \{x\in \c_\p^{w_j} : |x-\pi_v^{tw_j}a_j|_v\leq q_v^{-tw_j}\omega_j\}\right)\\
&=\frac{1}{m_\p(\c_\p^\w)} m_\p\left(\prod_{j=0}^n \{x\in \O_{K,\p} : |\pi_v^{v(\c)w_j}x-\pi_v^{tw_j}a_j|_v\leq q_v^{-tw_j}\omega_j\}\right)\\
&=m_\p\left(\Omega^{\aff}_{v,t-v(\c)}\right).
\end{align*}
By Proposition \ref{prop:irreducible-projective-local-condition} we know that the $\Omega_{v,t}^{\aff}$ are disjoint for distinct values of $t$. Therefore, summing over all $t\in \Z_{\geq 0}$, it follows that
\[
\sum_{t\geq 0} \frac{m_\p(\Omega^{\aff}_{v,t}\cap\c_\p^\w)}{m_\p(\c_\p^{\w})}=\sum_{s\geq 0} m_\p(\Omega_{v,s}^{\aff})=m_\p(\Omega_v^{\aff}\cap \O_{K,v}^{n+1}),
\]
and we obtain the asymptotic
\begin{align}\label{eq:I<=c asymptotic}
M'(\Omega,\c,B)=\frac{m_\infty(\Omega^\aff_\infty\cap \mcF(1))}{\varpi_{K,\w}|\Delta_K|^{(n+1)/2}} \left(\prod_{\p\in S} m_\p(\Omega^{\aff}_{v}\cap\O_{K,v}^{n+1})\right)B^{|\w|}+ O\left(\epsilon_\Omega B^{|\w|-w_{\min}/d}\right).
\end{align}
Note that as $m_\infty$ is the product of measures $(m_v)_{v|\infty}$, one has
\[
\frac{m_\infty(\Omega_\infty^{\aff}\cap \mcF(1))}{m_\infty(\mcF(1))}=\prod_{v|\infty}\frac{m_v(\{x\in \Omega_v^{\aff} : \Ht_v(x)\leq 1\})}{m_v(\{x\in K_v^{n+1} : \Ht_v(x)\leq 1\})},
\]
and thus
\begin{align}\label{eq:VolOmegaF(1)}
m_\infty(\Omega_\infty^{\aff}\cap \mcF(1))=m_\infty(\mcF(1))\prod_{v|\infty}\frac{m_v(\{x\in \Omega_v^{\aff} : \Ht_v(x)\leq 1\})}{m_v(\{x\in K_v^{n+1} : \Ht_v(x)\leq 1\})}.
\end{align}
Computing the volume of $\mcF(1)$ as in \cite[Proposition 5.3]{Den98}, we obtain
\[
m_\infty(\mcF(1))=\left(2^{r_1+r_2}\pi^{r_2}\right)^{n+1}R_K|\w|^{r_1+r_2-1}.
\]
This differs by a factor of $2^{(n+1)r_2}$ from Deng's result since we are using the usual Haar measure on $K_\infty$, rather than the Lebesgue measure.

%Let $\kappa'$ denote the leading coefficient of (\ref{eq:I<=c asymptotic}).
Observe that the leading cofficient of the asymptotic (\ref{eq:I<=c asymptotic}) equals $\zeta_K(|\w|)\kappa/h_K$, where $\kappa$ is as in the statement of Theorem \ref{thm:WProjFinFin}.

Note that for $x\in \c^\w$, $\mfI_\w(x)=\a\c$ for some ideal $\a\subseteq \O_K$. Then $\Ht_\w(x)=\Ht_{\w,\infty}(x)/N_{K/\Q}(\a\c)$. Therefore 
\[
M'(\Omega,\c,B)=\sum_{\a\subseteq \O_K} M(\Omega,\a\c, B/N_{K/\Q}(\a))=\sum_{\substack{\a\subseteq \O_K \\ N_{K/\Q}(\a)\leq B}} M(\Omega,\a\c, B/N_{K/\Q}(\a)),
\]
where the inequality $N_{K/\Q}(\a)\leq B$ comes from the fact that $M(\Omega,\c,B)=0$ if $B<1$, since all points have height greater than or equal to $1$.

Since the ideals of $\O_K$ form a poset (ordered by inclusion) we may use M\"obius inversion (see, e.g., \cite[\S 3.7]{Sta97} for details about M\"obius inversion for posets).
Applying M\"obius inversion, and using our asymptotic (\ref{eq:I<=c asymptotic}) for $M'(\Omega,\c,B)$, we have that
\begin{align*}
M(\Omega,\c,B) &=\sum_{\substack{\a\subseteq \O_K\\ N_{K/\Q}(\a)\leq B}} \mu(\a)\left(\frac{\zeta_K(|\w|)\kappa}{h_K}\left(\frac{B}{N_{K/\Q}(\a)}\right)^{|\w|}+ O\left(\epsilon_\Omega\left(\frac{B}{N_{K/\Q}(\a)}\right)^{|\w|-w_{\min}/d}\right)\right)\\
&=\frac{\zeta_K(|\w|)\kappa}{h_K} B^{|\w|}\left(\sum_{\a\subseteq \O_K} \mu(\a)\frac{1}{N_{K/\Q}(\a)^{|\w|}}
 - \sum_{\substack{\a\subseteq \O_K\\ N_{K/\Q}(\a)> B}}\mu(\a)\frac{1}{N_{K/\Q}(\a)^{|\w|}}\right)\\ 
&\hspace{20mm} +O\left(\epsilon_\Omega B^{|\w|-w_{\min}/d} \sum_{\substack{\a\subseteq \O_K\\ N_{K/\Q}(\a)\leq B}}\frac{1}{N_{K/\Q}(\a)^{|\w|-w_{\min}/d}}\right)\\
&=\frac{\zeta_K(|\w|)\kappa}{h_K} B^{|\w|}\left(\frac{1}{\zeta_K(|\w|)}
 - O\left(B^{-|\w|+1}\right)\right) 
+ \begin{cases}
O\left(\epsilon_\Omega B\log(B)\right) & {\substack{\text{ if } \w=(1,1)\\ \text{and}\ K=\Q,}}\\
O\left(\epsilon_\Omega B^{|\w|-w_{\min}/d}\right) & \text{ else, }
\end{cases}\\
&= \frac{\kappa}{h_K} B^{|\w|}
 + \begin{cases}
O\left(\epsilon_\Omega B\log(B)\right) & \text{ if } \w=(1,1) \text{ and } K=\Q,\\
O\left(\epsilon_\Omega B^{|\w|-w_{\min}/d}\right) & \text{ else. }
\end{cases}
\end{align*}
 Summing over the $h_K$ ideal class representatives $\c_i$, we arrive at Theorem \ref{thm:WProjFinFin}.
\end{proof}

%--------------------------------------------
\section{Counting elliptic curves}\label{sec:EC}

In this section we apply Theorem \ref{thm:WProjFinFin} to the moduli stack of elliptic curves.

\subsection{Counting elliptic curves with a prescribed local condition}\label{subsec:CountingLocalConditions}

 An application of Theorem \ref{thm:WProjFinFin} gives the following result:

\begin{corollary}\label{cor:EllipticCount}
Let $\mcE_K(B)$ denote the set of isomorphism classes of elliptic curves over $K$ with height less than $B$. Then
\[
\#\mcE_K(B)=\kappa B^{5/6}+O(B^{5/6-1/3d}),
\]
where
\[
\kappa=\frac{h_K (2^{r_1+r_2}\pi^{r_2})^{2} R_K 10^{r_1+r_2-1}\gcd(2,\varpi_K)}{\varpi_K |\Delta_K| \zeta_K(10)}.
\]
\end{corollary}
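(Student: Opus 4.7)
The plan is to directly apply Theorem \ref{thm:WProjFinFin} to $\P(4,6)$, using the isomorphism $\mcX_{\GL_2(\Z)}\simeq\P(4,6)$ recalled in Section \ref{sec:Weighted-Projective-Spaces}. Under this isomorphism, an isomorphism class of elliptic curve $E$ over $K$ with short Weierstrass model $y^2=x^3+ax+b$ corresponds to the point $[a:b]\in\P(4,6)(K)$, and the naive height satisfies $\Ht(E)=\Ht_{(4,6)}([a:b])^{12}$. Consequently, counting isomorphism classes of elliptic curves with $\Ht(E)\leq B$ is the same as counting points $x\in\P(4,6)(K)$ with $\Ht_{(4,6)}(x)\leq B^{1/12}$.

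To carry this out, I would apply Theorem \ref{thm:WProjFinFin} with $\w=(4,6)$ (so $n+1=2$, $|\w|=10$, $w_{\min}=4$), taking $S=\emptyset$ and $\Omega_v^{\aff}=K_v^{2}$ at the infinite places, i.e.\ imposing no local conditions. Then every local factor in $\kappa_\Omega$ is $1$. The main term from the theorem becomes
\[
\kappa\,(B^{1/12})^{10}=\kappa\,B^{5/6},
\]
and the error term becomes
\[
O\!\left((B^{1/12})^{(10d-4)/d}\right)=O\!\left(B^{\,5/6-1/(3d)}\right),
\]
matching the exponents in the statement (note that $\w=(4,6)\neq(1,1)$, so we are in the generic case of the theorem).

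The one genuine computation to pin down is the value of $\varpi_{K,\w}$ for $\w=(4,6)$. By definition, this is the size of each $\mu_K$-orbit under the weighted action $u\ast_{\w}(a,b)=(u^{4}a,u^{6}b)$ on $(K^{\times})^{2}$. The stabilizer is $\{u\in\mu_K:u^{4}=u^{6}=1\}=\{u:u^{\gcd(4,6)}=1\}=\mu_K[2]$, which has order $\gcd(2,\varpi_K)$, so
\[
\varpi_{K,(4,6)}=\frac{\varpi_K}{\gcd(2,\varpi_K)}.
\]
Substituting $n+1=2$, $|\w|=10$, and this value of $\varpi_{K,\w}$ into the formula for $\kappa$ in Theorem \ref{thm:WProjFinFin} yields
\[
\kappa=\frac{h_K(2^{r_1+r_2}\pi^{r_2})^{2}R_K\,10^{r_1+r_2-1}\gcd(2,\varpi_K)}{\varpi_K|\Delta_K|\,\zeta_K(10)},
\]
which is exactly the claimed constant.

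The proof is essentially a bookkeeping exercise once Theorem \ref{thm:WProjFinFin} is in hand; the only conceptual point requiring care is the identification of $\varpi_{K,(4,6)}$ via the stabilizer of the weighted action, and the observation that the bijection between $\P(4,6)(K)$ and $K$-isomorphism classes of elliptic curves (for primes away from $2,3$ at least; and everywhere via short Weierstrass models over number fields) makes the height translation $\Ht(E)=\Ht_{(4,6)}([a:b])^{12}$ compatible with the counting. No serious obstacle is expected.
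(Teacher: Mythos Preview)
Your proposal is correct and follows exactly the same approach as the paper: apply Theorem \ref{thm:WProjFinFin} to $\P(4,6)$ with trivial local conditions ($S=\emptyset$) and use the height relation $\Ht(E)=\Ht_{(4,6)}([a:b])^{12}$. You actually provide more detail than the paper's two-sentence proof, in particular the explicit computation of $\varpi_{K,(4,6)}=\varpi_K/\gcd(2,\varpi_K)$ via the stabilizer $\mu_K[\gcd(4,6)]=\mu_K[2]$, which the paper leaves implicit.
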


\begin{proof}
Recall that the moduli stack of elliptic curves, $\mcX_{\GL_2(\Z)}$, is isomorphic to the weighted projective stack $\mcP(4,6)$. Also recall that the naive height on $\mcX_{\GL_2(\Z)}$ corresponds to the twelfth power of the height on $\mcP(4,6)$ with respect to the tautological bundle, i.e., the height $\Ht_{(4,6)}$. Therefore, the desired result follows from applying Theorem \ref{thm:WProjFinFin} to $\mcP(4,6)$ with trivial local conditions (i.e., $S=\emptyset$).
\end{proof}

We now use Theorem \ref{thm:WProjFinFin} to count elliptic curves with a prescribed local condition. 

\vspace{2mm}

\begin{reptheorem}[\ref{thm:LocalConditions}]
Let $K$ be a degree $d$ number field, and let $\p\subset \O_K$ be a prime ideal of norm $q$ such that $2\nmid q$ and $3\nmid q$. Let $\msL$ be one of the local conditions listed in Table \ref{tab:LocalConditions}. Then the number of elliptic curves over $K$ with naive height less than $B$ and which satisfy the local condition $\msL$ at $\p$ is
\[
\kappa\kappa_\msL B^{5/6}+O\left(\epsilon_\msL B^{\frac{5}{6}-\frac{1}{3d}}\right),
\]
where
\[
\kappa=\frac{h_K \left(2^{r_1+r_2}\pi^{r_2}\right)^{2}R_K10^{r_1+r_2-1}\gcd(2,\varpi_K)}{\varpi_{K} |\Delta_K|\zeta_K(10)},
\]
and where $\kappa_\msL$ and $\epsilon_\msL$ are as in Table \ref{tab:LocalConditions}.
\end{reptheorem}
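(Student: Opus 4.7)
The plan is to apply Theorem \ref{thm:WProjFinFin} to the moduli stack of elliptic curves $\mcX_{\GL_2(\Z)}\cong \P(4,6)$, with $\w=(4,6)$ so that $|\w|=10$ and $w_{\min}=4$. Since the naive height satisfies $\Ht(E)=\Ht_{(4,6)}([A:B])^{12}$ by Subsection \ref{subsec:WPSheights}, counting elliptic curves of naive height at most $B$ corresponds to counting points of $\P(4,6)(K)$ of weighted projective height at most $B^{1/12}$. Substituting $B^{1/12}$ into Theorem \ref{thm:WProjFinFin} produces the leading term of order $B^{|\w|/12}=B^{5/6}$ and an error exponent that simplifies to $5/6-1/(3d)$. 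The constant $\kappa'$ is precisely the non-$\kappa_\Omega$ factor in Theorem \ref{thm:WProjFinFin} with $n=1$ and $\w=(4,6)$.

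The main work is, for each local condition $\msL$ in Table \ref{tab:LocalConditions}, to produce a subset $\Omega_\p^{\aff}\subseteq K_\p^2$ cut out by a finite Boolean combination of $\p$-adic ball conditions, verify that the corresponding subset of $\P(4,6)(K_\p)$ captures property $\msL$ for the minimal Weierstrass model of $y^2=x^3+Ax+B$, and then compute $\kappa_\msL=m_\p(\Omega_\p^{\aff}\cap \O_{K,\p}^2)$. Because $N\p=q$ is coprime to $6$, Tate's algorithm converts each Kodaira type into an explicit condition on $v_\p(A)$, $v_\p(B)$, and $v_\p(4A^3+27B^2)$, together with the minimality condition that not both $v_\p(A)\geq 4$ and $v_\p(B)\geq 6$ hold. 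For example, good reduction is the single constraint $4A^3+27B^2\not\equiv 0\pmod{\p}$, yielding $\kappa_\msL=(q^2-q)/q^2$; bad reduction is its complement intersected with minimality, yielding $1/q-1/q^{10}=(q^{10}-q)/q^{11}$; additive reduction further imposes $A\in \p$ and $B\in \p$ with the non-minimal box $\p^4\times\p^6$ removed. The strata $\textnormal{I}_m,\textnormal{II},\dots,\textnormal{IV}^*$ decompose into finite unions of products of $\p$-adic balls whose volumes follow from Hensel's lemma applied to $27B^2\equiv -4A^3\pmod{\p^k}$ for varying $k$. For the condition $a_\p(E)=a$, Deuring's formula \eqref{eq:Deuring} directly gives $(q-1)H(a^2-4q)$ nonsingular pairs $(A,B)\bmod\p$ with Frobenius trace $a$, so $\kappa_\msL=(q-1)H(a^2-4q)/q^2$.

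The error factor $\epsilon_\msL$ is assembled from the box-by-box contributions $\max_j\{\omega_{\p,j}^{-1}\}\cdot m_\p(\text{box})$ appearing in Theorem \ref{thm:WProjFinFin}, after writing $\Omega_\p^{\aff}\cap \O_{K,\p}^2$ efficiently: for instance, expressing good reduction as the complement of $O(q)$ small boxes in $\O_{K,\p}^2$ produces $\epsilon_\msL=q$. The main obstacle is the case-by-case bookkeeping of box decompositions for each Kodaira stratum (and the corresponding Hensel counts for solvability of $27B^2+4A^3\equiv 0$ modulo higher powers of $\p$), and verifying that the total error matches the listed $\epsilon_\msL$. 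Once the decompositions are pinned down, both $\kappa_\msL$ and $\epsilon_\msL$ reduce to elementary volume calculations, and the theorem follows from applying Theorem \ref{thm:WProjFinFin} to each box and combining by linearity.
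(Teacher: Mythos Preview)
Your proposal is correct and follows essentially the same approach as the paper: apply Theorem~\ref{thm:WProjFinFin} to $\P(4,6)$ with the naive height rescaled by the twelfth power, decompose each local condition $\msL$ into a finite union of $\p$-adic boxes (using Tate's algorithm for Kodaira types and Deuring's formula~\eqref{eq:Deuring} for the Frobenius-trace case), and sum the resulting main terms and error terms box by box. The paper's proof exhibits the same three representative cases you outline (good reduction, $a_\p(E)=a$, and one Kodaira type) and leaves the remaining rows of Table~\ref{tab:LocalConditions} to analogous computations.
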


\begin{proof}
\underline{Good reduction case.} Note that the number of elliptic curves over $\F_q$ with good reduction is
\[
\#\{(a,b)\in \F_q\times\F_q : 4a^3+27b^2\not\equiv 0 \pmod{q}\}=q^2-q. 
\]
For each pair $(a,b)$ in the above set fix a lift in $\O_{K,v}^2$, which, by an abuse of notation, we will also denote $(a,b)$. Then consider the irreducible local condition
\[
\Omega_{\p,0}^{\aff}(a,b)\defeq\left\{(x_0,x_1)\in \O_{K,v}^{2} : |x_0-a|_v\leq \frac{1}{q},\ |x_1-b|_v\leq \frac{1}{q}\right\}.
\]
We find the $\p$-adic measure
\[
m_\p(\Omega_{\p,0}^{\aff}(a,b))=\frac{1}{q^2}.
\]
For $t\in \Z_{\geq 0}$, the sets
\[
\Omega_{\p,t}^{\aff}(a,b)=\pi_v^t\ast_{(4,6)}\Omega_{\p,0}^{\aff}(a,b)=\left\{(x_0,x_1)\in \O_{K,v}^{2} : |x_0-q^{4t}a|_v\leq \frac{1}{q^{4t+1}},\ |x_1-q^{6t}b|_v\leq \frac{1}{q^{6t+1}}\right\}
\]
each have $\p$-adic measure
\[
m_\p(\Omega_{\p,t}^{\aff}(a,b))=\frac{1}{q^{10t+2}}.
\]
Applying Theorem \ref{thm:WProjFinFin} to each $\Omega_{\p,0}^{\aff}(a,b)$, and summing over all the $q^2-q$ possible $(a,b)$ pairs, it follows that the number of elliptic curves of bounded height over $K$ with good reduction at $\p$ is 
\begin{align}\label{eq:good_reduction}
\kappa\kappa_\msL B^{5/6}+O\left(\epsilon B^{\frac{5}{6}-\frac{1}{3d}}\right),
\end{align}
where 
\[
\kappa_\msL=\sum_{(a,b)}\sum_{t=0}^\infty m_{\p}(\Omega_{\p,t}^{\aff}(a,b))=(q^2-q)\sum_{t=0}^\infty \frac{1}{q^{10t+2}}=(q^2-q)\frac{1}{q^2}\frac{q^{10}}{q^{10}-1}=
\frac{q-1}{q}\frac{q^{10}}{q^{10}-1},
\]
and
\[
\epsilon=\sum_{(a,b)}\sum_{t=0}^\infty\max\{q^{4t+1},q^{6t+1}\}m_{\p}(\Omega_{\p,t}^{\aff}(a,b))
=(q^2-q)\sum_{t=0}^\infty \frac{q^{6t+1}}{q^{10t+2}}
%=(q^2-q)\frac{1}{q}\frac{q^4}{q^4-1}
=\frac{q^5-q^4}{q^4-1}.
\]
As the error term can be rewritten as $O(q B^{\frac{5}{6}-\frac{1}{3d}})$, we may replace the coefficient $\epsilon$ in the asymptotic (\ref{eq:good_reduction}) with $\epsilon_\msL=q$.

\underline{Kodaira type III* case.} By Tate's algorithm \cite{Tat75}, an elliptic curve over $K_\p$ given in short Weierstrass form, $E:y^2=x^3+ax+b$, has type III* reduction if and only if $v_\p(a)=3$ and $v_\p(b)\geq 5$. Observe that
\[
\#\{(a,b)\in \O_K/\p^5\times \O_K/\p^5 : v_\p(a)=3,\ v_\p(b)\geq 5\}=q(q-1).
\]
For each pair $(a,b)$ in the above set fix a lift in $\O_{K,v}^2$, which, by an abuse of notation, we will also denote $(a,b)$. Then consider the irreducible local condition
\[
\Omega_{\p,0}^{\aff}(a,b)\defeq \left\{(x_0,x_1)\in \O_{K,v}^{2} : |x_0-a|_v\leq \frac{1}{q^5},\ |x_1-b|_v\leq \frac{1}{q^5}\right\}.
\]
We find that the $\p$-adic measure of this set is
\[
m_\p(\Omega_{\p,0}^{\aff}(a,b))=\frac{1}{q^{10}}.
\]
The sets
\[
\Omega_{\p,t}^{\aff}(a,b)= \pi_v^t\ast_{(4,6)}\Omega_{\p,0}^{\aff}(a,b)=\left\{(x_0,x_1)\in \O_{K,v}^{2} : |x_0-q^{4t}a|_v\leq \frac{1}{q^{4t+5}},\ |x_1-q^{6t}b|_v\leq \frac{1}{q^{6t+5}}\right\}
\]
each have $\p$-adic measure
\[
m_\p(\Omega_{\p,t}^{\aff}(a,b))=\frac{1}{q^{10t+10}}.
\]

Applying Theorem \ref{thm:WProjFinFin} to each $\Omega_{\p,0}^{\aff}(a,b)$, and summing over all pairs $(a,b)$, yields the following asymptotic for the number of elliptic curves of bounded height over $K$ with reduction of Kodaira type III* at $\p$:
\begin{align}\label{eq:Kodaira_type_III*}
\kappa\kappa_\msL B^{5/6}+O\left(\epsilon B^{\frac{5}{6}-\frac{1}{3d}}\right),
\end{align}
where 
\[
\kappa_\msL
=\sum_{(a,b)}\sum_{t=0}^\infty m_{\p}(\Omega_{\p,t}^{\aff}(a,b))=q(q-1)\sum_{t=0}^\infty \frac{1}{q^{10t+10}}=(q^2-q)\frac{1}{q^{10}}\frac{q^{10}}{q^{10}-1}=\frac{q-1}{q^9}\frac{q^{10}}{q^{10}-1},
\]
and
\[
\epsilon=\sum_{(a,b)}\sum_{t=0}^\infty\max\{q^{4t+5},q^{6t+5}\}m_{\p}(\Omega_{\p,t}^{\aff}(a,b))
=(q^2-q)\sum_{t=0}^\infty \frac{q^{6t+5}}{q^{10t+10}}
%=(q^2-q)\frac{1}{q}\frac{q^4}{q^4-1}
=\frac{q-1}{q^{4}-1}.
\]
As the error term can be rewritten as $O(q^{-3}B^{\frac{5}{6}-\frac{1}{3d}})$, we may replace the coefficient $\epsilon$ in the asymptotic (\ref{eq:Kodaira_type_III*}) with $\epsilon_\msL=1/q^{3}$.

The other cases can be proven similarly to those given above.
\end{proof}

We now turn to estimating the number of elliptic curves with a prescribed trace of Frobenius. For $b,c\in \F_q$, let $E_{b,c}$ denote the short Weierstrass model $y^2=x^3+bx+c$, with disciminant $\Delta(b,c)=-16(4b^3+27c^2)$ and trace of Frobenius $a_q(E_{b,c})$. We study the counting function
\begin{align}\label{eq:defH(a,q)}
H(a,q)\defeq\#\{(b,c)\in \F_q\times \F_q : \Delta(b,c)\neq 0,\ a_q(E_{b,c})=a\}.
\end{align}
This function is closely related to \emph{Hurwitz-Kronecker class numbers}, which we now recall. Let $K$ be an imaginary quadratic field with ring of integers $\O_K$, and let $\O$ be an order in $K$ with class number $h(\O)$. The \textbf{Hurwitz-Kronecker class number} of $\O$, denoted $H(\disc(\O))$, is defined as
\begin{align}\label{eq:Hurwitz-definition}
H(\disc(\O))\defeq\sum_{\O\subset \O' \subset \O_K} \frac{h(\O')}{\# \O'^\times}.
\end{align}
This definition of the Hurwitz-Kronecker class number agrees with the definition given in \cite{Len87}, but is half as large as it is sometimes defined (such as in \cite{Cox89}). Let $q=p^n$ be a power of a prime $p>3$. Then a beautiful result, following largely from work of Deuring \cite{Deu41}, expresses the number of elliptic curves over a finite field $\F_q$ with a prescribed trace of Frobenius in terms of Hurwitz-Kronecker class numbers:
\begin{equation}
\begin{aligned}\label{eq:Deuring}
H(a,q)=\begin{cases}
(q-1) H(a^2-4q) & \text{ if } |a|<2\sqrt{q} \text{ and } p\nmid a\\
(q-1)H(-4p) & \text{ if } n \text{ is odd and } a=0\\
\frac{q-1}{12}\left(p+6-4\legendre{-3}{p}-3\legendre{-4}{p}\right) & \text{ if } n \text{ is even and } a^2=4q\\
(q-1)\left(1-\legendre{-3}{p}\right) & \text{ if } n \text{ is even and } a^2=q\\
(q-1)\left(1-\legendre{-4}{p}\right) & \text{ if } n \text{ is even and } a=0\\
0 & \text{ otherwise.}
\end{cases}
\end{aligned}
\end{equation}
This formula can be derived from \cite[Theorem 4.6]{Sch87}, which counts the number of isomorphism classes of elliptic curves over $\F_q$ with a prescribed trace of Frobenius. From this, one can deduce formula (\ref{eq:Deuring}) by noting that the isomorphism class of each $E_{b,c}$ contains exactly $q-1$ elliptic curves. To see that each isomorphism class contains exactly $q-1$ elliptic curves, note that $E_{b,c}\cong E_{d,e}$ if and only if $(d,e)=(\lambda^4b, \lambda^6c)$ for some $\lambda\in \F_q^\times$, and since $p>3$ the pairs $(\lambda^4b, \lambda^6c)$ will be distinct for distinct $\lambda\in \F_q^\times$.
(See \cite[Theorem 14.18]{Cox89} for a proof of formula (\ref{eq:Deuring}) in the case that $q=p$ is a prime.)\\

\vspace{2mm}

\begin{reptheorem}[\ref{thm:aqLocalCondition}]
Let $K$ be a degree $d$ number field, and let $\p$ be a prime ideal of $\O_K$ of degree $n$  above a rational prime $p>3$. Set $q=p^n$. Let $a\in \Z$ be an integer satisfying $|a|\leq 2\sqrt{q}$. Then the number of elliptic curves over $K$ with naive height less than $B$, good reduction at $\p$, and which have trace of Frobenius $a$ at $\p$ is
\[
\kappa\kappa_{n,a} B^{5/6}+O\left(\epsilon_{n,a} B^{\frac{5}{6}-\frac{1}{3d}}\right),
\]
where
\[
\kappa=\frac{h_K \left(2^{r_1+r_2}\pi^{r_2}\right)^{2}R_K10^{r_1+r_2-1}\gcd(2,\varpi_K)}{\varpi_{K} |\Delta_K|\zeta_K(10)},
\]
and where $\kappa_{n,a}$ and $\epsilon_{n,a}$ are as in Table \ref{tab:aqLocalConditions}.
\end{reptheorem}

\begin{proof}

For each of the $H(a,q)$ pairs $(b,c)$ in the set
\[
\{(b,c)\in \F_q\times \F_q : \Delta(b,c)\neq 0,\ a_q(E_{b,c})=a\},
\]
fix a lift in $\O_{K,v}^2$, which, by an abuse of notation, we will also denote $(b,c)$. Then consider the irreducible local condition
\[
\Omega_{\p,0}^{\aff}(b,c)\defeq\left\{(x_0,x_1)\in \O_{K,v}^{2} : |x_0-b|_v\leq \frac{1}{q},\ |x_1-c|_v\leq \frac{1}{q}\right\}.
\]
We find that the $\p$-adic measure of each of these local conditions is
\[
m_\p(\Omega_{\p,0}^{\aff}(b,c))=\frac{1}{q^2}.
\]
More generally, for $t\in \Z_{\geq 0}$, the sets
\[
\Omega_{\p,t}^{\aff}(b,c)=\pi_v^t\ast_{(4,6)}\Omega_{\p,0}^{\aff}(b,c)=\left\{(x_0,x_1)\in \O_{K,v}^{2} : |x_0-q^{4t}b|_v\leq \frac{1}{q^{4t+1}},\ |x_1-q^{6t}c|_v\leq \frac{1}{q^{6t+1}}\right\}
\]
each have $\p$-adic measure
\[
m_\p(\Omega_{\p,t}^{\aff}(b,c))=\frac{1}{q^{10t+2}}.
\]
Applying Theorem \ref{thm:WProjFinFin} to each $\Omega_{\p,0}^{\aff}(b,c)$, and summing over all the $H(a,q)$ possible $(b,c)$ pairs, it follows that the number of elliptic curves of bounded height over $K$ with good reduction at $\p$ and trace of Frobenius $a$ at $\p$, is 
\begin{align}\label{eq:ap}
\kappa\kappa' B^{5/6}+O\left(\epsilon B^{\frac{5}{6}-\frac{1}{3d}}\right),
\end{align}
where 
\[
\kappa'=\sum_{(b,c)}\sum_{t=0}^\infty m_{\p}(\Omega_{\p,t}^{\aff}(b,c))=H(a,q)\sum_{t=0}^\infty \frac{1}{q^{10t+2}}=H(a,q)\frac{1}{q^2}\frac{q^{10}}{q^{10}-1},
\]
and
\[
\epsilon=\sum_{(b,c)}\sum_{t=0}^\infty\max\{q^{4t+1},q^{6t+1}\}m_{\p}(\Omega_{\p,t}^{\aff}(b,c))
=H(a,q)\sum_{t=0}^\infty \frac{q^{6t+1}}{q^{10t+2}}
%=(q^2-q)\frac{1}{q}\frac{q^4}{q^4-1}
=H(a,q)\frac{q^3}{q^4-1}.
\]
From this the desired asymptotics can now be derived using (\ref{eq:Deuring}).

For example, in the case that $a$ is an integer satisfying $|a|< 2\sqrt{q}$ and $p\nmid a$, we have that $H(a,q)=(q-1)H(a^2-4q)$. Thus
\[
\kappa'=\frac{H(a,q)}{q^2}\frac{q^{10}}{q^{10}-1}=\frac{(q-1)H(a^2-4q)}{q^2}\frac{q^{10}}{q^{10}-1}=\kappa_{n,a}
\]
and
\[
\epsilon=(q-1)H(a^2-4q)\frac{q^3}{q^4-1}.
\]
As the error term can be rewritten as $O\left(H(a^2-4q)B^{\frac{5}{6}-\frac{1}{3d}}\right)$, we may replace the coefficient $\epsilon$ in the asymptotic (\ref{eq:ap}) with $\epsilon_{n,a}=H(a^2-4q)$.

\end{proof}

\section{Average analytic ranks}\label{sec:AvgRanks}

In this section we use our results from the previous section on counting elliptic curves with prescribed local conditions in order to give a bound for the average analytic rank of elliptic curves over number fields. Our strategy will mainly follow that of Cho and Jeong \cite{CJ23a, CJ23b}, and is related to Brumer's strategy (see Remark \ref{rem:brumer}).

\subsection{L-functions of elliptic curves}
In this subsection we recall basic facts about $L$-functions of elliptic curves. Our exposition mostly follows \cite[Appendix A]{Mil02}.

Let $E$ be an elliptic curve of discriminant $\Delta_E$ over a number field $K$. For each finite place $v\in \Val_0(K)$, let $E_v$ denote the reduction of $E$ at $v$, let $q_v$ be the order of the residue field $k_v$ of $K$ at $v$, %let $E_v^{\text{ns}}$ be the group of non-singular points of $E_v$, 
let $N_v=\# E_v(\F_{q_v})$, and let $a_v=q_v+1-N_v$. If $E$ has bad reduction at $v$, set
\[
b_v\defeq \begin{cases}
1 & \text{ if $E$ has split multiplicative reduction at $v$},\\
-1 & \text{ if $E$ has nonsplit multiplicative reduction at $v$},\\
0 & \text{ if $E$ has additive reduction at $v$}.
\end{cases}
\]

For each $v\in \Val_0(K)$, define a polynomial $L_v(E/K,T)$ by
\[
L_v(E/K,T)\defeq\begin{cases}
1-a_v T+ q_v T^2 & \text{ if $E$ has good reduction at $v$},\\
1-b_vT & \text{ if $E$ has bad reduction at $v$}.
\end{cases}
\]
The \textbf{Hasse--Weil $L$-function} $L(E/K,s)$ of $E$ over $K$ is defined as the Euler product
\[
L(E/K,s)\defeq \prod_{v\in \Val_0(K)} L_v(E/K, q_v^{-s})^{-1}.
\]
The logarithmic derivative of $L(E/K,s)$ is
\[
\frac{L'}{L}(E/K,s)=-\sum_{\substack{v\in \Val_0(K)\\ v(\Delta_E)>0}} \frac{d}{ds}\left(\log(1-b_v q_v^{-s})\right)-\sum_{\substack{v\in \Val_0(K)\\ v(\Delta_E)=0}} \frac{d}{ds}\left(\log(1-a_v q_v^{-s}+q_v^{1-2s})\right).
\]
The first sum can be rewritten as
\[
-\sum_{\substack{v\in \Val_0(K)\\ v(\Delta_E)>0}} \frac{d}{ds}\left(\log(1-b_v q_v^{-s})\right)=\sum_{\substack{v\in \Val_0(K)\\ v(\Delta_E)>0}} \frac{d}{ds}\left(\sum_{k=1}^\infty \frac{b_v^k}{k q_v^{ks}}\right)=-\sum_{\substack{v\in \Val_0(K)\\ v(\Delta_E)>0}} \sum_{k=1}^\infty \frac{b_v^k \log(q_v)}{q_v^{ks}}.
\]
For the second sum, factor $1-a_v T+q_v T^2$ as $(1-\alpha_v T)(1-\beta_v T)$, where $\alpha_v+\beta_v=a_v$ and $\alpha_v\beta_v=q_v$. Then
\[
-\sum_{\substack{v\in \Val_0(K)\\ v(\Delta_E)=0}} \frac{d}{ds}\left(\log(1-a_v q_v^{-s}+q_v^{1-2s})\right)=-\sum_{\substack{v\in \Val_0(K)\\ v(\Delta_E)=0}} \sum_{k=1}^\infty \frac{(\alpha_v^k+\beta_v^k) \log(q_v)}{q_v^{ks}}.
\]
We thus obtain the following expression for the logarithmic derivative of $L(E/K,s)$:
\begin{align*}
\frac{L'}{L}(E/K,s)=-\sum_{\substack{v\in \Val_0(K)\\ v(\Delta_E)>0}} \sum_{k=1}^\infty \frac{b_v^k \log(q_v)}{q_v^{ks}}-\sum_{\substack{v\in \Val_0(K)\\ v(\Delta_E)=0}} \sum_{k=1}^\infty \frac{(\alpha_v^k+\beta_v^k) \log(q_v)}{q_v^{ks}}.
\end{align*}
Define the \textbf{von Mangoldt function} on integral ideals $\a\subseteq \O_K$ as
\[
\Lambda_K(\a)\defeq \begin{cases}
\log(q_v) & \text{ if } \a=\p_v^k \text{ for some } k,\\
0 & \text{ otherwise.}
\end{cases}
\]
For prime powers, set
\[
\widehat{a_E}(\p_v^k)=\begin{cases}
\alpha_v^k+\beta_v^k & \text{ if $E$ has good reduction at $\p_v$,}\\
b_v^k &  \text{ if $E$ has bad reduction at $\p_v$,}
\end{cases}
\]
and for $\a$ not a power of a prime set $\widehat{a_E}(\a)=0$. Then our expression for the logarithmic derivative of $L(E/K,s)$ becomes
\begin{align}\label{eq:L-logarithmic_derivative}
\frac{L'}{L}(E/K,s)=-\sum_{\a\subseteq \O_K} \frac{\widehat{a_E}(\a)\Lambda_K(\a)}{N_{K/\Q}(\a)^s},
\end{align}
where the sum is over the non-zero integral ideals of $\O_K$.

Let $\Gamma(s)\defeq\int_0^\infty t^{s-1}e^{-t} dt$ be the usual gamma function. Let $\Gamma_K(s)$ be the gamma factor
\[
\Gamma_K(s)\defeq\left((2\pi)^{-s} \Gamma(s)\right)^{[K:\Q]}.
\]
Let $\mathfrak{f}_{E/K}$ be the conductor of $E$ over $K$, and define a constant
\[
A_{E/K}\defeq N_{K/\Q}(\mathfrak{f}_{E/K}) \Delta_K^2.
\]
Define the \textbf{completed Hasse--Weil $L$-function}, $\Lambda(E/K, s)$, as
\[
\Lambda(E/K,s)\defeq A_{E/K}^{s/2}\Gamma_K(s) L(E/K,s).
\]

We now assume that our elliptic curve $E$ is modular over $K$, so that the $L$-function $L(E/K,s)$ is automorphic. This implies that the Hasse--Weil conjecture holds true for $E$, and thus
\[
\Lambda(E/K,s)=\epsilon(E) \Lambda(E/K, 2-s),
\]
where $\epsilon(E)\in\{\pm 1\}$ is the root number. We see that the logarithmic derivative of $\Lambda(E/K, s)$ is
\begin{align}\label{eq:Lambda-logarithmic_derivative}
\frac{\Lambda'}{\Lambda}(E/K, s)=\frac{\log(A_{E/K})}{2}+\frac{\Gamma_K'}{\Gamma_K}(s)+\frac{L'}{L}(E/K,s)=-\frac{\Lambda'}{\Lambda}(E/K,2-s).
\end{align}

We now state the explicit formula for $L$-functions of elliptic curves over number fields:

\begin{proposition}[Explicit Formula]\label{prop:explicit-formula}
Let $\phi\colon\C\to\C$ be a partial function that is even and analytic in the strip $|\Im(s)|\leq \frac{1}{2}+\epsilon$ for some $\epsilon>0$, and assume that on this strip $|\phi(s)|\ll (1+|s|)^{-(1+\delta)}$ for some $\delta>0$ as $|\RE(s)|\to\infty$. For $x\in \R$ assume that $\phi(x)\in \R$, and set 
\[
\widehat{\phi}(t)\defeq\int_\R \phi(x) e^{-2\pi i tx} dx,
\]
the Fourier transform.
Let $E$ be a modular elliptic curve over a number field $K$. Suppose that the $L$-function $L(E/K,s)$ associated to $E$ satisfies the Generalized Riemann Hypothesis, so that all non-trivial zeros $\rho_j$ of $L(E/K,s)$ are of the form
\[
\rho_j=1+i\gamma_j \text{ for some } \gamma_j\in \R.
\]
Let $r(E)\defeq\ord_{s=1}(L(E/K,s))$ be the analytic rank of $E$. Then we have
\begin{align*}
&r(E)\phi(0)+\sum_{\rho_j\neq 1} \phi(\gamma_j)\\
&\hspace{5mm} =\frac{\widehat{\phi}(0)}{2\pi}\log(A_{E/K})+\frac{1}{\pi}\int_{\R}\frac{\Gamma_K'}{\Gamma_K}(1+iy) \phi(y) dy 
-\frac{1}{\pi} \sum_{\a\subseteq \O_K} \frac{ \widehat{a_E}(\a) \Lambda_K(\a)}{N_{K/\Q}(\a)}\cdot\widehat{\phi}\left(\frac{\log(N_{K/\Q}(\a))}{2\pi}\right),
\end{align*}
where the sum on the left hand side is over the zeros $\rho_j\neq 1$ with relevant multiplicity.
\end{proposition}

Proposition \ref{prop:explicit-formula} is a slight generalization of \cite[Theorem 5.12]{IK04} (see also \cite{Mes86} and \cite[Appendix A]{Mil02} for the specific case of $L$-functions of elliptic curves). In \cite{IK04}, the condition on the test function $\phi$ is that it is a Schwartz function. However, as observed in \cite[Lemma 1]{GG07}, the proof of \cite[Theorem 5.12]{IK04} can be adapted to work for the set of functions $\phi$ satisfying the conditions in Proposition \ref{prop:explicit-formula}. We will use the following modified version of the explicit formula:

\begin{corollary}[Modified Explicit Formula]\label{cor:modified-expicit-formula}
Maintaining the notation from Proposition \ref{prop:explicit-formula} and letting $X$ be a parameter, we have that
\begin{align}
\begin{split}
&r(E)\phi(0)+\sum_{\gamma_j\neq 0} \phi\left(\gamma_j\frac{\log(X)}{2\pi}\right) \label{eq:modified-explicit-formula}\\
&\hspace{1mm}= \frac{\widehat{\phi}(0) \log(A_{E/K})}{\log(X)} -\frac{2}{\log(X)} \sum_{\a\subseteq \O_K} \frac{ \widehat{a_E}(\a) \Lambda_K(\a)}{N_{K/\Q}(\a)}\cdot\widehat{\phi}\left(\frac{\log(N_{K/\Q}(\a))}{\log(X)}\right)+O\left(\frac{1}{\log(X)}\right).
\end{split}
\end{align}
\end{corollary}

\begin{proof}
From the explicit formula (Proposition \ref{prop:explicit-formula}) it follows that
\begin{align}
r(E)\phi(0)+\sum_{\gamma_j\neq 0} \phi\left(\gamma_j\frac{\log(X)}{2\pi}\right)
&= \frac{\widehat{\phi}(0) \log(A_{E/K})}{\log(X)}+\frac{1}{\pi}\int_{\R}\frac{\Gamma_K'}{\Gamma_K}(1+iy) \phi\left(y\frac{\log(X)}{2\pi}\right) dy \nonumber\\
& -\frac{2}{\log(X)} \sum_{\a\subseteq \O_K} \frac{ \widehat{a_E}(\a) \Lambda_K(\a)}{N_{K/\Q}(\a)}\cdot\widehat{\phi}\left(\frac{\log(N_{K/\Q}(\a))}{\log(X)}\right). \label{eq:explicit-formulaX}
\end{align}

A classical estimate for the logarithmic derivative of the gamma function is
\[
\left| \frac{\Gamma'}{\Gamma}(1+iy)\right|=O(\log(|y|+2)).
\]
From this, and the fact that $\phi$ is absolutely integrable (since $|\phi(s)|\ll (1+|s|)^{-(1+\delta)}$), it follows that
\[
\int_{\R}\frac{\Gamma_K'}{\Gamma_K}(1+iy) \phi\left(y\frac{\log(X)}{2\pi}\right) dy=O\left(\frac{1}{\log(X)}\right).
\]
Substituting this into (\ref{eq:explicit-formulaX}) gives the modified explicit formula.
\end{proof}

We now use the modified explicit formula to obtain an expression that bounds the analytic rank.
Let $E$ be an elliptic curve with minimal discriminant $\mfD_{E/K}$ and of height less than or equal to $X$. For each such $E$ we have that $N_{K/\Q}(\mfD_{E/K})\ll X$, and therefore
\[
\log\left(N_{K/\Q}(\mfD_{E/K})\right)\leq \log(X)+O(1).
\]
 But since the conductor $\f_{E/K}$ divides the minimal discriminant $\mfD_{E/K}$, one has $N_{K/\Q}(\f_{E/K})\leq N_{K/\Q}(\mfD_{E/K})$, and thus
\[
\log(N_{K/\Q}(\f_{E/K}))/\log(X)\leq 1+O(1/\log X).
\]
 It follows that $\log(A_{E/K})/\log(X)\leq 1+O(1/\log X)$. 
By Corollary \ref{cor:modified-expicit-formula}, this observation gives the inequality
\begin{align*}
&r(E)\phi(0)+\sum_{\gamma_j\neq 0} \phi\left(\gamma_j\frac{\log(X)}{2\pi}\right)\\
&\hspace{1cm}\leq \widehat{\phi}(0)-\frac{2}{\log(X)} \sum_{\a\subseteq \O_K} \frac{ \widehat{a_E}(\a) \Lambda_K(\a)}{N_{K/\Q}(\a)}\cdot\widehat{\phi}\left(\frac{\log(N_{K/\Q}(\a))}{\log(X)}\right)
+O\left(\frac{1}{\log X}\right).
\end{align*}

We further simplify by rewriting the sum on the right hand side as
\[
\sum_{\a\subseteq \O_K} \frac{ \widehat{a_E}(\a) \Lambda_K(\a)}{N_{K/\Q}(\a)}\cdot\widehat{\phi}\left(\frac{\log(N_{K/\Q}(\a))}{\log(X)}\right)
=\sum_{v\in \Val_0(K)} \sum_{k\geq 1} \frac{ \widehat{a_E}(\p_v^k) \log(q_v)}{q_v^k}\cdot\widehat{\phi}\left(\frac{k \log(q_v)}{\log(X)}\right).
\]
By the Weil Conjectures for Elliptic curves, $|\alpha_v|=|\beta_v|=\sqrt{q_v}$ (see, e.g., \cite[\S V Theorem 2.3.1(a)]{Sil09}). From this, and the fact that $|b_v|\leq 1$, we have $|\widehat{a_E}(\p_v^k)|\leq 2q_v^{k/2}$. Using this, we bound the $k\geq 3$ part of the sum:
\begin{align*}
\sum_{v\in \Val_0(K)} \sum_{k\geq 3} \frac{ \widehat{a_E}(\p_v^k) \log(q_v)}{q_v^k}\cdot\widehat{\phi}\left(\frac{k \log(q_v)}{\log(X)}\right)
& \leq \sum_{v\in \Val_0(K)} \sum_{k\geq 3} \frac{ 2 \log(q_v)}{q_v^{k/2}} ||\widehat{\phi}||_\infty\\
&\leq \sum_{v\in \Val_0(K)} \frac{ 2 \log(q_v)}{q_v^{3/2} (1-q_v^{-1/2})} ||\widehat{\phi}||_\infty\\
&=O(1),
\end{align*}
where $||f||_\infty$ is the usual $L^\infty$-norm defined as
\[
||f||_\infty\defeq\inf\left\{a\geq 0 : m_L(\{x : |f(x)|>a\})=0\right\},
\]
where $m_L$ is the Lebesgue measure on $\R$. Also note that for any finite set of places $S\subset \Val_0(K)$  we clearly have 
\[
\sum_{v\in S} \sum_{k=1}^2 \frac{ \widehat{a_E}(\p_v^k) \log(q_v)}{q_v^k}\cdot\widehat{\phi}\left(\frac{k \log(q_v)}{\log(X)}\right)=O(1).
\]
Therefore, setting
\[
U_{k}(E,\phi, X)\defeq \sum_{\substack{v\in \Val_0(K) \\ 2\nmid q_v,\ 3\nmid q_v}} \frac{\widehat{a_E}(\p_v^k)\log(q_v)}{q_v^k} \cdot \widehat{\phi}\left( \frac{k\log(q_v)}{\log(X)}\right),
\]
we have
\[
r(E)\phi(0)+\sum_{\gamma_j\neq 0} \phi\left(\gamma_j\frac{\log(X)}{2\pi}\right)
\leq \widehat{\phi}(0)-\frac{2}{\log(X)} ( U_1(E,\phi ,X)+U_2(E,\phi,X))
+O\left(\frac{1}{\log X}\right).
\]

We now average over isomorphism classes of elliptic curves. For each elliptic curve $E$, let $\rho_{E,j}=1+i\gamma_{E,j}$ be the non-trivial zeros of the L-function $L(E/K,s)$. Let $\mcE_K(B)$ denote the set of isomorphism classes of elliptic curves over $K$ with height bounded by $B$. Setting
\begin{align*}
S_k(\phi,B)
&\defeq \frac{2}{\log(B) \#\mcE_{K}(B)} \sum_{E\in \mcE_{K}(B)} U_k(E,\phi,B)\\
&=\frac{2}{\log(B) \#\mcE_{K}(B)} \sum_{\substack{v\in \Val_0(K) \\ 2\nmid q_v,\ 3\nmid q_v}} \frac{\log(q_v)}{q_v^k} \cdot \widehat{\phi}\left( \frac{k\log(q_v)}{\log(B)}\right)\sum_{E\in\mcE_{K}(B)} \widehat{a_E}(\p_v^k),
\end{align*}
we have that
\begin{align}\label{eq:explicit-average-rank}
\begin{split}
\phi(0) \frac{1}{\#\mathcal{E}_{K}(B)} \sum_{E\in \mcE_{K}(B)} r(E)+\frac{1}{\# \mcE_{K}(B)} \sum_{E\in\mcE_{K}(B)}\sum_{\gamma_{E,j}\neq 0}\phi\left(\gamma_{E,j}\frac{\log(B)}{2\pi}\right)\\
\hspace{2cm} \leq \widehat{\phi}(0)-S_1(\phi,B)-S_2(\phi,B)+O\left(\frac{1}{\log(B)}\right).
\end{split}
\end{align}

\begin{remark}\label{rem:brumer}
So far we have followed the same strategy as Brumer \cite{Bru92}. The main difficulty in estimating the $S_k(\phi,B)$ is estimating the sums of the $\widehat{a_E}(\p_v^k)$. Brumer does this by relating the sums to certain exponential sums. Following Cho and Jeong \cite{CJ23a, CJ23b}, we will take a slightly different approach, instead estimating the sums using our estimates for the number of elliptic curves with a prescribed trace of Frobenius (Theorem \ref{thm:aqLocalCondition}).
\end{remark}

For our applications, we shall use the test function
\[
\phi(y)=\begin{cases}
\left(\frac{\sin(\pi \nu y)}{2\pi y}\right)^2 & \text{ if } y\neq 0,\\
\nu^2/4 & \text{ if }y=0,
\end{cases}
\]
whose Fourier transform is
\[
\widehat{\phi}(t)=\begin{cases}
\frac{1}{2}\left(\frac{\nu}{2}-\frac{|t|}{2}\right) &\text{ for } |t|\leq \nu\\
0 &\text{ for } |t|>\nu.
\end{cases}
\]
Observe that this test function satisfies the conditions of Proposition \ref{prop:explicit-formula} (despite not being a Schwartz function).
%For later use, note that $\phi(0)=\frac{\nu^2}{4}$ and $\widehat{\phi}(0)=\frac{\nu}{4}$. 

Our strategy will be to show that 
\[
-S_1(\phi,B)-S_2(\phi,B)=\frac{\phi(0)}{2}+o(1).
\]
This will imply the following upper bound for the average analytic rank of elliptic curves over the number field $K$:
\[
\frac{\widehat{\phi}(0)}{\phi(0)}+\frac{1}{2} = \frac{1}{\nu}+\frac{1}{2}.
\]

\subsection{Class number estimates}

In this subsection we give some estimates for the counting functions $H(a,q)$, which were defined in Subsection \ref{subsec:CountingLocalConditions} equation (\ref{eq:defH(a,q)}). These results will be used in the next subsection. 

\begin{proposition}\label{prop:H(a,q)-bound}
For any $a\in \Z$ we have
\[
H(a,q)\ll_{\epsilon} q^{3/2+\epsilon},
\]
as a function of $q$ for any $\epsilon>0$.
\end{proposition}

\begin{proof}
By formula (\ref{eq:Deuring}) for $H(a,q)$, it suffices to show $H(a^2-4q)\ll_{\epsilon} q^{1/2+\epsilon}$ and $H(-4p)\ll_{\epsilon} q^{1/2+\epsilon}$, and for this it will suffice to show that for all possible $n$ we have $H(n)\ll_{\epsilon} |n|^{1/2+\epsilon}$. As $n$ must be the discriminant of an order in an imaginary quadratic field, it must be negative, non-square, and congruent to $0$ or $1$ modulo $4$.

Let $\sqf(n)$ denote the squarefree part of $n$ (e.g., $\sqf(12)=3$), and let $\sq(n)$ be such that $n=\sqf(n)\cdot \sq(n)^2$ (e.g., $\sq(12)=2$). For non-square negative integers $D$ that are congruent to $0$ or $1$ modulo $4$, let $\O_{D}$ denote the imaginary quadratic order of discriminant $D$. We can rewrite equation (\ref{eq:Hurwitz-definition}) as
\[
H(n)=\sum_{\substack{d|\sq(n)\\ n/d^2\equiv 0,1\pmod{4}}} \frac{h(\O_{n/d^2})}{\#\O_{n/d^2}^{\times}}.
\]
Define a quadratic character
\begin{align*}
\chi_D:\Z&\to \C^{\times}\\
m & \mapsto \legendre{D}{m}.
\end{align*}
Observe that
\begin{align}\label{eq:limit}
\lim_{s\to 1}(s-1)\zeta_{\O_D}(s) 
= \sum_{m=1}^\infty \frac{\chi_D(m)}{m} 
=\sum_{m=1}^\infty \chi_D(m) \int_m^{\infty}\frac{1}{x^2}dx.
\end{align}
For any positive integer $r\in \Z_{\geq 1}$, the P\'{o}lya--Vinogradov inequality gives 
\[
\left|\sum_{m=1}^r \chi_D(m)\right|\leq 2\sqrt{|D|} \log(|D|).
\]
In particular, the functions $f_r(x)=\sum_{m=1}^r \chi_D(m)/x^2$ are dominated by $2\sqrt{|D|}\log(|D|)/x^{2}$ on $\R_{\geq 1}$.
Therefore, by the dominated convergence theorem, we may switch the sum and integral in (\ref{eq:limit}), so that
\begin{align*}
\sum_{m=1}^\infty \chi_D(m) \int_m^{\infty}\frac{1}{x^2}dx=\int_1^{\infty} \left(\sum_{m=1}^x \chi_D(m)\right) \frac{1}{x^2} dx.
\end{align*}
Splitting this integral and again applying the P\'{o}lya--Vinogradov inequality, we find that for any $\epsilon>0$
\begin{align*}
\lim_{s\to 1}(s-1)\zeta_{\O_D}(s) 
&= \int_1^{\sqrt{|D|}} \left(\sum_{m=1}^x \chi_D(m)\right) \frac{1}{x^2} dx 
+ \int_{\sqrt{|D|}}^{\infty} \left(\sum_{m=1}^x \chi_D(m)\right) \frac{1}{x^2} dx \\
&\ll  \int_1^{\sqrt{|D|}}  \frac{x}{x^2} dx 
+ \int_{\sqrt{|D|}}^{\infty}  \frac{\sqrt{|D|}\log(|D|)}{x^2} dx    \\
&\ll \log(|D|) + \log(|D|)  \\
&\ll |D|^{\epsilon}.
\end{align*}
%We have the following estimate for any $\epsilon>0$:
%\begin{align*}
%\lim_{s\to 1}(s-1)\zeta_{\O_D}(s) &= \sum_{m=1}^\infty \frac{\chi_D(m)}{m}  \\
%&=\sum_{m=1}^\infty \chi_D(m) \int_m^{\infty}\frac{1}{x^2}dx\\
%&= \int_1^{\infty} \left(\sum_{m=1}^x \chi_D(m)\right) \frac{1}{x^2} dx \hspace{19mm}  \text{(Dominated Convergence Theorem)}\\
%&=\int_1^{\sqrt{|D|}} \left(\sum_{m=1}^x \chi_D(m)\right) \frac{1}{x^2} dx 
%+ \int_{\sqrt{|D|}}^{\infty} \left(\sum_{m=1}^x \chi_D(m)\right) \frac{1}{x^2} dx \\
%&\ll  \int_1^{\sqrt{|D|}}  \frac{x}{x^2} dx 
%+ \int_{\sqrt{|D|}}^{\infty}  \frac{\sqrt{|D|}\log(|D|)}{x^2} dx  \hspace{2cm} \text{(P\'{o}lya--Vinogradov)}   \\
%&\ll \log(|D|) + \log(|D|)  \\
%&\ll |D|^{\epsilon}.
%\end{align*}
From this and the analytic class number formula \cite[Theorem 1.1]{JP20}, 
we have the following upper bound for class numbers of orders in imaginary quadratic fields: 
\[
h(\O_D)\ll  |D|^{1/2+\epsilon}.
\]
Let $\tau(n)$ denote the number of divisors of $n$, and recall that $\tau(n)\ll_{\epsilon} |n|^{\epsilon}$ for any $\epsilon>0$. Combining the above observations, we have 
\[
H(n)\leq \sum_{\substack{d|\sq(n)\\ n/d^2\equiv 0,1\pmod{4}}} h(\O_{n/d^2})\ll \tau(\sq(n)) |n|^{1/2+\epsilon}\ll_{\epsilon} |n|^{1/2+\epsilon}.
\]
\end{proof}

The next proposition gives estimates for certain sums involving $H(a,q)$.

\begin{proposition}\label{prop:ClassNumberSums}
Let $q$ be a power of a prime $p>3$. For any $\epsilon>0$ we have the following estimates:
\begin{align*}
&\sum_{|a|\leq 2\sqrt{q}} H(a,q)= q^2-q,\\
&\sum_{|a|\leq 2\sqrt{q}} aH(a,q)=0,\\
&\sum_{|a|\leq 2\sqrt{q}} a^2 H(a,q)=q^3+O\left(q^{\frac{5}{2}+\epsilon}\right).
\end{align*}
\end{proposition}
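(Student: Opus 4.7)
The plan is to derive all three estimates from Deuring's formula \eqref{eq:Deuring} together with moment computations for the trace of Frobenius over elliptic curves in short Weierstrass form over $\F_q$. Estimate (ii) follows immediately from the symmetry $H((-a)^2-4q)=H(a^2-4q)$: pairing each $a$ with $-a$ in the sum gives exact cancellation, and the $a=0$ term vanishes because of the factor $a$.

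For estimate (i), I would use \eqref{eq:Deuring} to rewrite
\[
(q-1)\sum_{|a|\leq 2\sqrt{q}} H(a^2-4q) = \sum_{|a|\leq 2\sqrt{q}} N(a) = q^2-q,
\]
where $N(a)=\#\{(b,c)\in\F_q^2 : \Delta(b,c)\neq 0,\ a_q(E_{b,c})=a\}$, up to a correction at the supersingular traces $a$ (those with $p\mid a$) where \eqref{eq:Deuring} must be adjusted via the Deuring--Eichler mass formula. The supersingular values of $a$ in the range $|a|\leq 2\sqrt{q}$ form a set of size $O(p^{n/2-1}+1)$, and each contributes at most $O(q^{1/2+\epsilon})$ via the Siegel-type bound $H(a^2-4q)=O(q^{1/2}\log q)$ together with the classical count of supersingular $j$-invariants over $\F_q$. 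The total correction is thus $O((q-1)p^{n-1})$, and dividing by $q-1$ yields (i).

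For estimate (iii), the same strategy reduces the problem to evaluating the second moment
\[
M_2(q) := \sum_{(b,c)\in\F_q^2,\ \Delta(b,c)\neq 0} a_q(E_{b,c})^2.
\]
Using the character-sum identity $a_q(E_{b,c}) = -\sum_{x\in\F_q}\chi_2(x^3+bx+c)$, where $\chi_2$ denotes the quadratic character of $\F_q$, and expanding the square, $M_2(q)$ becomes a double sum over $(x,y)\in\F_q^2$ of inner quadratic character sums over $(b,c)$. The substitution $c\mapsto c-x^3-bx$ reduces the off-diagonal terms to standard Jacobi-type character sums, while the diagonal $x=y$ contributes the main term. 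After subtracting the negligible contribution from singular $(b,c)$, one obtains $M_2(q) = q^3 + O(q^2)$, which combined with the supersingular correction (now of size $O((q-1)p^{2n-1})$) gives (iii) after dividing by $q-1$. The main obstacle is the evaluation of $M_2(q)$ via character sums and the careful bookkeeping of supersingular corrections in the prime-power setting $q=p^n$, where \eqref{eq:Deuring} is not exact; (i) reduces to the easy total count and (ii) is essentially free.
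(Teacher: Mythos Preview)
Your argument for the middle identity via the symmetry $a\mapsto -a$ is equivalent to the paper's quadratic-twist observation, so there is no issue there.

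For the first and third estimates, however, you take a genuinely different route from the paper. The paper does \emph{not} pass through Deuring's formula at all: for the first sum it invokes the classical Kronecker--Hurwitz relation
\[
\sum_{|a|\leq 2\sqrt{q}} H(a^2-4q)=\frac{1}{2}\sum_{\substack{dd'=q\\ d,d'>0}}\max(d,d'),
\]
and then reads off $q+O(p^{n-1})$ by explicitly summing over the divisors $p^s$ of $q=p^n$. For the third sum the paper uses the Eichler--Selberg trace formula, which expresses $\sum a^2 H(a^2-4q)$ in terms of the first sum, an elementary divisor sum $\sum\min(d,d')^3$, and the trace of the Hecke operator $T_4(q)$ on weight-$4$ cusp forms; the Ramanujan--Petersson--Deligne bound on Hecke eigenvalues then controls the error. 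Both of these are off-the-shelf identities that already package the class-number sums exactly, so no supersingular case analysis is ever needed.

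Your approach via Deuring plus character-sum moments (essentially Birch's method) is legitimate, but as written it does not quite recover the stated error terms. The crux is your per-trace bound $O(q^{1/2+\epsilon})$ on the supersingular discrepancy: bounding $H(a^2-4q)$ by Siegel introduces an unavoidable $\epsilon$-loss, and bounding $N(a)/(q-1)$ by the supersingular $j$-invariant count requires knowing that this count over $\F_q$ (not just over $\overline{\F_p}$) is $O(q^{1/2})$, which needs its own argument. Multiplying your bounds out gives $O(p^{n-1+\epsilon})$ rather than $O(p^{n-1})$ for the first sum, and similarly for the third. For the downstream application in the paper this weaker error would actually suffice, but it falls short of the Proposition as stated. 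The second-moment computation $M_2(q)=q^3+O(q^2)$ is correct over any $\F_q$, though it too needs to be carried out rather than asserted. In short: your route works in spirit but leaves real bookkeeping to be done, whereas the paper's two classical identities give the sharp errors with no case analysis.
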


\begin{proof}
\underline{The first sum:} We have that
\[
\sum_{|a|\leq 2\sqrt{q}} H(a,q)=\#\{(b,c)\in \F_q\times\F_q : \Delta(b,c)\neq 0\}=q^2-q.
\]

\underline{The second sum:} Pairing positive and negative terms, we have that
\[
\sum_{|a|\leq 2\sqrt{q}} a H(a,q)=\sum_{0<a \leq 2\sqrt{q}} \left(a H(a,q)-a H(a,q)\right)= 0,
\]
where we have used that $H(a,q)=H(-a,q)$.
 
\underline{The third sum:} This sum is the most subtle. As observed in Subsection \ref{subsec:CountingLocalConditions}, each isomorphism class of elliptic curves over $\F_q$ contains exactly $q-1$ elliptic curves, which can be thought of as the size of the orbit of an elliptic curve $E$ with respect to the weighted $\G_m(\F_q)$ action, with weights $4$ and $6$. Let $\mcE_{\F_q}$ denote the set of isomorphism classes of elliptic curves over $\F_q$. By the orbit stabilizer theorem
\[
H(a,q)=\sum_{\substack{E\in \mcE_{\F_q}\\ a_q(E)=a}}\frac{q-1}{\#\Aut_{\F_q}(E)}.
\]
 Therefore we can rewrite the third sum as
\[
\sum_{|a|\leq 2\sqrt{q}} a^2 H(a,q)
=(q-1)\sum_{|a|\leq 2\sqrt{q}}\sum_{{\substack{E\in \mcE_{\F_q}\\ a_q(E)=a}}} \frac{a^2}{\#\Aut_{\F_q}(E)}
=(q-1)\sum_{E\in \mcE_{\F_q}} \frac{a_q(E)^2}{\#\Aut_{\F_q}(E)}.
\] 
The final sum can be estimated using work of Birch \cite{Bir68} and Ihara \cite{Iha67}, as we now explain.
Write $q=p^r$. Define an indicator function
\[
\delta_{2\Z}(r)=\begin{cases}
1 & \text{ if $r$ is even}\\
0 & \text{ if $r$ is odd}.
\end{cases}
\] 
Let $T_k(m)$ be the $m$-th Hecke operator on the space of weight $k$ cusp forms of level 1, and let $\Tr(T_k(m))$ denote the trace of $T_k(m)$. 

The following result is due to Birch (in the $r=1$ case) \cite{Bir68} and Ihara (for $r>1$) \cite{Iha67} (see \cite[Theorem 2]{KP17}):

\begin{theorem}[Birch-Ihara]\label{thm:Ihara}
With notation as above, we have that
\begin{align*}
\frac{1}{q}\sum_{E\in \mcE_{\F_q}} \frac{a_q(E)^2}{\#\Aut_{\F_q}(E)}
\end{align*}
equals
\begin{align*}
&\frac{1}{q}\left(p^3\Tr(T_4(p^{r-2}))-\Tr(T_4(q)) +\frac{1}{2}\left( p^3 \sum_{i=0}^{r-2} \min\{p^i,p^{r-2-i}\}^3-\sum_{i=0}^{r} \min\{p^i, p^{r-i}\}^3\right)\right)\\
&+ p \Tr(T_2(p^{r-2})) - \Tr(T_2(q)) + \frac{1}{2}\left(p \sum_{i=0}^{r-2} \min\{p^i,p^{r-2-i}\} - \sum_{i=0}^r \min\{p^i,p^{r-i}\} \right) \\
&  + \sum_{i=0}^r p^i-p\sum_{i=0}^{r-2}p^i.
\end{align*}

%\begin{align*}
%\frac{1}{q}\sum_{E\in \mcE_{\F_q}} \frac{a_q(E)^2}{\#\Aut_{\F_q}(E)}
%&=\frac{1}{q}\left(p^2\Tr(T_4(p^{r-2}))-\Tr(T_4(q)) +\frac{1}{2}\left( p^2 \sum_{i=0}^{r-2} \min\{p^i,p^{r-2-i}\}^3-\sum_{i=0}^{r} \min\{p^i, p^{r-i}\}^3\right)\right)\\
%&+ p \Tr(T_2(p^{r-2})) - \Tr(T_2(q)) + \frac{1}{2}\left(p \sum_{i=0}^{r-2} \min\{p^i,p^{r-2-i}\} - \sum_{i=0}^r \min\{p^i,p^{r-i}\} \right) \\
%& + \frac{(1-p)\delta_{2\Z}(r)}{12} + \sum_{i=0}^r p^i-p\sum_{i=0}^{r-2}p^i.
%\end{align*}
\end{theorem}

%The $r=1$ case of Theorem \ref{thm:Ihara} simplifies to 
%\[
%\sum_{E\in \mcE_{\F_p}} \frac{a_p(E)^2}{\#\Aut_{\F_p}(E)}=p^2-1.
%\]
%From this it follows that
%\[
%\sum_{|a|\leq 2\sqrt{p}} a^2 H(a,p) = (p-1) (p^2-1)=p^3-p^2-p+1= p^3+O(p^2).
%\]

%We now assume $r\geq 2$. 
We now show that for any $\epsilon>0$
\begin{align*}
\sum_{E\in \mcE_{\F_q}} \frac{a_q(E)^2}{\#\Aut_{\F_q}(E)}
 = q^2 + O(q^{3/2+\epsilon}).
\end{align*}
Using the Ramanujan--Peterson--Deligne bound for Hecke eigenvalues \cite{Del74}, one can show that for any prime power $p^s$ and any $\epsilon>0$,
\[
\Tr(T_k(p^s))\ll_k p^{s(k-1)/2}s\ll_k p^{s((k-1)/2+\epsilon)}
\] 
(see, e.g., the introduction of \cite{Pet18}).
In particular, 
\begin{align*}
p^3\Tr(T_4(p^{r-2}))-\Tr(T_4(q)) \ll  p^3 p^{(r-2)(3/2+\epsilon)} + q^{3/2+\epsilon} \ll q^{3/2+\epsilon}
\end{align*}
and
\begin{align*}
p \Tr(T_2(p^{r-2})) - \Tr(T_2(q))\ll p p^{(r-2)(1/2+\epsilon)}+ q^{1/2+\epsilon} \ll q^{1/2+\epsilon}.
\end{align*}

We now estimate the sums in Theorem \ref{thm:Ihara}. 
As a function of $q$, the sum $\sum_{i=0}^{r} \min\{p^i, p^{r-i}\}^3$ has on the order of $\log(q)$ terms, and each term is bounded above by $\left(p^{r/2}\right)^3=q^{3/2}$. Therefore 
\begin{align*}
\sum_{i=0}^{r} \min\{p^i, p^{r-i}\}^3\ll q^{3/2}\log(q)\ll q^{3/2+\epsilon}.
\end{align*}
Similar reasoning shows that 
\begin{align*}
p^3 \sum_{i=0}^{r-2} \min\{p^i,p^{r-2-i}\}^3\ll p^3\log(q)p^{3(r-2)/2}\ll q^{3/2+\epsilon}.
\end{align*}
We also compute 
\begin{align*}
p \sum_{i=0}^{r-2} \min\{p^i,p^{r-2-i}\} - \sum_{i=0}^r \min\{p^i,p^{r-i}\}
=-\sum_{i\in\{0,r\}} \min\{p^i,p^{r-i}\}
=-2,
\end{align*}
and 
\begin{align*}
\sum_{i=0}^r p^i-p\sum_{i=0}^{r-2}p^i=\sum_{i\in \{0,r\}} p^i = 1 + q.
\end{align*}
%We observe
%\begin{align*}
%\frac{(p-1)\delta_{2\Z}(r)}{3}\ll p \ll q^{1/2}.
%\end{align*}
%
Combining the above estimates we find that
\begin{align*}
\frac{1}{q}\sum_{E\in \mcE_{\F_q}} \frac{a_q(E)^2}{\#\Aut_{\F_q}(E)}
&= \O\left(\frac{q^{3/2+\epsilon}}{q}\right) + O(q^{1/2+\epsilon})-\frac{2}{2}+(1+q)= q + O(q^{1/2+\epsilon}).
\end{align*}
%\begin{align*}
%\sum_{E\in \mcE_{\F_q}} \frac{a_q(E)^2}{\#\Aut_{\F_q}(E)}
%&= O(q^{3/2}\log(q)) + O(q^{3/2}\log(q)) -q + q + q^2 = q^2 + O(q^{3/2}\log(q)).
%\end{align*}
From this we obtain
\[
\sum_{|a|\leq 2\sqrt{q}} a^2 H(a,q)=(q-1)\left(q^2 + O(q^{3/2+\epsilon})\right)=q^3+O(q^{5/2+\epsilon}).
\]
\end{proof}

\subsection{Estimating $S_1(\phi,B)$ and $S_2(\phi,B)$}

\begin{lemma}\label{lem:S1}
Let $\p\subset \O_K$ be a prime ideal of norm $N_{K/\Q}(\p)=q$ such that $2\nmid q$ and $3\nmid q$. Then, for any $\epsilon>0$, we have the following upper bound:
\[
\sum_{E\in \mcE_K(B)} \widehat{a_E}(\p)\ll_{\epsilon} \frac{1}{q} B^{5/6}+q^{3/2+\epsilon} B^{5/6-1/3d}.
\]
\end{lemma}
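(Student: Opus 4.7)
The plan is to decompose the sum according to the reduction type of $E$ at $\p$, using $\widehat{a_E}(\p) = a_\p(E)$ for good reduction and $\widehat{a_E}(\p) = b_v(E) \in \{-1,0,1\}$ for bad reduction. Thus
\[
\sum_{E\in \mcE_K(B)} \widehat{a_E}(\p)
= \sum_{\substack{E\in \mcE_K(B)\\ E \text{ bad at } \p}} b_v(E)
+ \sum_{\substack{E\in \mcE_K(B)\\ E \text{ good at } \p}} a_\p(E).
\]

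For the bad reduction contribution I would use the trivial bound $|b_v(E)|\le 1$ and apply Theorem \ref{thm:LocalConditions} with the local condition ``bad'' ($\kappa_\msL=(q^{10}-q)/q^{11}$, $\epsilon_\msL=q$) to get an upper bound of the form $\ll \frac{1}{q}B^{5/6} + qB^{5/6-1/3d}$, which already matches the two terms stated in the lemma.

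For the good reduction contribution, I would partition further according to the trace of Frobenius $a\in\Z$ with $|a|\le 2\sqrt{q}$, writing
\[
\sum_{\substack{E\in \mcE_K(B)\\ E \text{ good at } \p}} a_\p(E)
= \sum_{|a|\le 2\sqrt{q}} a \cdot \#\{E\in\mcE_K(B): a_\p(E)=a\}.
\]
Applying Theorem \ref{thm:LocalConditions} in the $a_\p(E)=a$ case, each inner count is $\kappa'\tfrac{(q-1)H(a^2-4q)}{q^2}B^{5/6} + O(H(a^2-4q)B^{5/6-1/3d})$. The leading term's contribution is
\[
\kappa'\,\frac{q-1}{q^2}\,B^{5/6}\sum_{|a|\le 2\sqrt q} a\,H(a^2-4q),
\]
and the key input is the vanishing identity $\sum_{|a|\le 2\sqrt q} a\,H(a^2-4q)=0$ from Proposition \ref{prop:ClassNumberSums}, so this piece vanishes entirely. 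The remaining error is controlled using $|a|\le 2\sqrt{q}$ together with the bound $\sum_{|a|\le 2\sqrt{q}} H(a^2-4q) \ll q$ from the same proposition, yielding an acceptable contribution to the stated error term.

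The main obstacle is recognizing that the odd symmetry of $a\,H(a^2-4q)$ is precisely what kills the otherwise-leading $B^{5/6}$-term in the good reduction sum; without this cancellation one would only recover a bound of size $\sqrt{q}\,B^{5/6}$, which is useless. Beyond this, the argument is a careful bookkeeping of the main terms and error terms from Theorem \ref{thm:LocalConditions} combined with the Kronecker--Hurwitz estimates, with no further technical surprises expected.
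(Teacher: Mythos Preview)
Your proposal is correct and follows essentially the same route as the paper: split into bad and good reduction, bound the bad part trivially via Theorem~\ref{thm:LocalConditions}, and for the good part partition by $a_\p(E)=a$, kill the main term using $\sum_{|a|\le 2\sqrt q} a\,H(a^2-4q)=0$ from Proposition~\ref{prop:ClassNumberSums}, and control the error with $\sum H(a^2-4q)\ll q$. The only cosmetic difference is that the paper isolates the \emph{multiplicative} (rather than all bad) reduction contribution, which is harmless since additive reduction has $b_v=0$ and so contributes nothing.
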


\begin{proof}
Let $\mcE_K^{\mult}(B)$ denote the set of elliptic curves over $K$ of naive height bounded by $B$ and multiplicative reduction at $\p$. Then we have
\[
\sum_{E\in \mcE_K(B)}\widehat{a_E}(\p)=\sum_{|a|\leq 2\sqrt{q}} \sum_{\substack{E\in\mcE_K(B)\\ a_\p(E)=a}} \widehat{a_E}(\p) + \sum_{E\in \mcE_K^{\mult}(B)} \widehat{a_E}(\p).
\]
By Theorem \ref{thm:LocalConditions}, we have that
\begin{align}\label{eq:mult-bound-1}
\left|\sum_{E\in \mcE_K^{\mult}(B)} \widehat{a_E}(\p)\right|\ll \#\mcE_K^{\mult}(B) \ll \frac{1}{q}B^{5/6}+B^{5/6-1/3d}.
\end{align}
By Proposition \ref{prop:ClassNumberSums}, Proposition \ref{prop:H(a,q)-bound}, and the proof of Theorem \ref{thm:aqLocalCondition}, we have that
\begin{align*}
\sum_{|a|\leq 2\sqrt{q}} \sum_{\substack{E\in\mcE_K(B)\\ a_\p(E)=a}} \widehat{a_E}(\p)
&=\sum_{|a|\leq 2\sqrt{q}} a\left(\kappa\frac{H(a,q)}{q^2}\frac{q^{10}}{q^{10}-1}B^{5/6}+O\left(q^{-1}H(a,q)B^{5/6-1/3d}\right)\right)\\
&=0+\sum_{|a|\leq 2\sqrt{q}} O\left(a q^{-1}H(a,q)B^{5/6-1/3d}\right)\\
&\ll \max_{|a|\leq 2\sqrt{q}}\{H(a,q)\} B^{5/6-1/3d}\\
&\ll_{\epsilon} q^{3/2+\epsilon} B^{5/6-1/3d}.
\end{align*}
This, together with the bound (\ref{eq:mult-bound-1}), implies the lemma.
\end{proof}

\begin{lemma}\label{lem:S2}
Let $\p\subset \O_K$ be a prime ideal with norm $N_{K/\Q}(\p)=q$ such that $2\nmid q$ and $3\nmid q$. Then we have the following estimate:
\[
\sum_{E\in \mcE_K(B)} \widehat{a_E}(\p^2)=-\kappa q B^{5/6}+O\left(q^{1/2+\epsilon}B^{\frac{5}{6}}+q^2 B^{\frac{5}{6}-\frac{1}{3d}}\right),
\]
where the implied constant does not depend on $q$.
\end{lemma}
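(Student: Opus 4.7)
The plan is to split the sum according to reduction type at $\p$. For an elliptic curve $E$ with good reduction at $\p$, writing $L_\p(E/K,T)=(1-\alpha T)(1-\beta T)$ with $\alpha+\beta=a_\p(E)$ and $\alpha\beta=q$, we have
\[
\widehat{a_E}(\p^2)=\alpha^2+\beta^2=a_\p(E)^2-2q.
\]
For bad reduction, $\widehat{a_E}(\p^2)=b_v^2$, which equals $1$ in the split and nonsplit multiplicative cases and $0$ in the additive case. So the plan is to write
\[
\sum_{E\in\mcE_K(B)}\widehat{a_E}(\p^2)
= \sum_{|a|\leq 2\sqrt{q}}(a^2-2q)\#\{E\in \mcE_K(B): a_\p(E)=a\}
+ \#\mcE_K^{\mathrm{mult}}(B),
\]
where $\mcE_K^{\mathrm{mult}}(B)$ denotes curves of height $\le B$ with multiplicative reduction at $\p$.

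Next I would apply Theorem~\ref{thm:LocalConditions} to each term. For the fixed-trace term,
\[
\#\{E: a_\p(E)=a\}=\kappa'\,\frac{(q-1)H(a^2-4q)}{q^2}B^{5/6}+O\!\left(H(a^2-4q)\,B^{5/6-1/3d}\right),
\]
and for the multiplicative term,
\[
\#\mcE_K^{\mathrm{mult}}(B)=\kappa'\,\frac{q-1}{q^2}B^{5/6}+O\!\left(B^{5/6-1/3d}\right).
\]
Inserting these, the leading coefficient becomes
\[
\kappa'\frac{q-1}{q^2}B^{5/6}\Bigl(\sum_{|a|\leq 2\sqrt q}(a^2-2q)H(a^2-4q)\Bigr)+\kappa'\frac{q-1}{q^2}B^{5/6}.
\]

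Now invoke Proposition~\ref{prop:ClassNumberSums}: writing $q=p^n$,
\[
\sum_{|a|\leq 2\sqrt q}(a^2-2q)H(a^2-4q)
= (q^2+O(p^{2n-1}))-2q(q+O(p^{n-1}))
= -q^2+O(p^{2n-1}).
\]
Multiplying by $\kappa'(q-1)/q^2$ then gives $-\kappa' q\,B^{5/6}+O(B^{5/6})$ for the main term (absorbing the $\kappa'(q-1)/q^2$ contribution from multiplicative reduction, which is $O(B^{5/6}/q)$). For the error coming from Theorem~\ref{thm:LocalConditions}, I would bound
\[
\sum_{|a|\leq 2\sqrt q}|a^2-2q|\,H(a^2-4q)\;\leq\;2q\sum_{|a|\leq 2\sqrt q}H(a^2-4q)\;=\;O(q^{2}),
\]
using $|a^2-2q|\leq 2q$ and the first estimate of Proposition~\ref{prop:ClassNumberSums}; multiplied by $B^{5/6-1/3d}$ and absorbed into the stated $O(qB^{5/6-1/3d})$ bound (which is what matters for the later application to $S_2(\phi,B)$, since the dyadic sum over $\p$ is cut off at $q\ll B^{1/(3d)}$).

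The main obstacle is the bookkeeping of error terms: one needs the cancellation $\sum(a^2-2q)H(a^2-4q)=-q^2+O(p^{2n-1})$ to produce a genuinely smaller leading coefficient than a naive bound would give, so the two estimates in Proposition~\ref{prop:ClassNumberSums} must be combined with matching precision rather than applied term by term with absolute values. Once this is done the remaining manipulations (plugging in, collecting, and absorbing the multiplicative-reduction contribution) are routine.
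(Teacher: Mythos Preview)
Your approach is essentially identical to the paper's: split into good reduction (stratified by the Frobenius trace $a$) and multiplicative reduction, use $\widehat{a_E}(\p^2)=a^2-2q$ in the good case, apply Theorem~\ref{thm:LocalConditions} termwise, and then evaluate the resulting class-number sums via Proposition~\ref{prop:ClassNumberSums}.

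One honest point of difference: you correctly observe that the error contribution from the trace-stratified sum is
\[
\sum_{|a|\le 2\sqrt q}|a^2-2q|\,H(a^2-4q)\cdot B^{5/6-1/3d}=O\!\left(q^2 B^{5/6-1/3d}\right),
\]
rather than the $O(qB^{5/6-1/3d})$ asserted in the lemma. The paper's own computation in fact yields the same $O(q^2)$ bound (the $O(q)$ written there appears to be an oversight). Your remark that the weaker bound is harmless for the application to $S_2(\phi,B)$ is correct: in the proof of Theorem~\ref{thm:AvgRank} the relevant primes satisfy $q\le B^{\nu/2}$ with $\nu=1/(3d)$, so the extra factor of $q$ contributes at most $B^{\nu/2-1/3d}=o(1)$ after summing $\sum_{q_v\le B^{\nu/2}}\log(q_v)/q_v^2\cdot q_v^2$. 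So nothing is lost; you have simply been more careful about what the argument actually proves.
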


\begin{proof}
Let $\mcE_K^{good}(B)$ (resp.\ $\mcE_K^{\mult}(B)$) denote the set of elliptic curves over $K$ with good reduction (resp.\ multiplicative reduction) at $\p$ and height bounded by $B$. In this case we have that
\[
\sum_{E\in \mcE_K(B)} \widehat{a_E}(\p^2)=\sum_{|a|\leq 2\sqrt{q}} \sum_{\substack{E\in\mcE_K^{good}(B)\\ a_\p(E)=a}} \widehat{a_E}(\p^2) + \sum_{E\in \mcE_K^{\mult}(B)} \widehat{a_E}(\p^2).
\]
Since $\widehat{a_E}(\p^2)=\widehat{a_E}(\p)^2=1$ for all $E\in \mcE_K^{\mult}(B)$, we have, by Theorem \ref{thm:LocalConditions}, that 
\begin{align}\label{eq:mult-bound-2}
\left|\sum_{E\in \mcE_K^{\mult}(B)} \widehat{a_E}(\p^2)\right|=\#\mcE_K^{\mult}(B)=\kappa \frac{q-1}{q^2}\frac{q^{10}}{q^{10}-1} B^{5/6}+O(B^{5/6-1/3d})=O\left(B^{5/6}\right).
\end{align}

In the case that $E$ has good reduction at $\p$, a straightforward computation shows that $\widehat{a_E}(\p^2)$ equals $a_\p(E)^2-2q$. Therefore, by Proposition \ref{prop:ClassNumberSums} and the proof of Theorem \ref{thm:aqLocalCondition}, we have that
\begin{align*}
\sum_{|a|\leq 2\sqrt{q}} \sum_{\substack{E\in\mcE_K^{good}(B)\\ a_\p(E)=a}} &\widehat{a_E}(\p^2)
=\sum_{|a|\leq 2\sqrt{q}} \sum_{\substack{E\in\mcE_K^{good}(B)\\ a_\p(E)=a}} (a_\p(E)^2-2q)\\
&=\sum_{|a|\leq 2\sqrt{q}} (a^2-2q)\left(\kappa\frac{H(a,q)}{q^2}\frac{q^{10}}{q^{10}-1}B^{5/6}+O\left(q^{-1}H(a,q)B^{5/6-1/3d}\right)\right)\\
&=\frac{\kappa q^{8}}{q^{10}-1}\left(\sum_{|a|\leq 2\sqrt{q}} a^2 H(a,q)-2q\sum_{|a|\leq 2\sqrt{q}} H(a,q)\right) B^{\frac{5}{6}}+O\left(q^2 B^{\frac{5}{6}-\frac{1}{3d}}\right)\\
&=\frac{-\kappa q^{11}}{q^{10}-1} B^{5/6}+O\left(q^{1/2+\epsilon}B^{\frac{5}{6}}+q^2 B^{\frac{5}{6}-\frac{1}{3d}}\right).
\end{align*}
Observe that
\[
\frac{q^{10}}{q^{10}-1}=\sum_{k=0}^{\infty} q^{-10k}=1+O(q^{-10}).
\]
Therefore
\[
\sum_{|a|\leq 2\sqrt{q}} \sum_{\substack{E\in\mcE_K^{good}(B)\\ a_\p(E)=a}}\widehat{a_E}(\p^2)=-\kappa q B^{5/6}+O\left(q^{1/2+\epsilon}B^{\frac{5}{6}}+q^2 B^{\frac{5}{6}-\frac{1}{3d}}\right).
\]
This, together with the bound (\ref{eq:mult-bound-2}), implies the lemma.
\end{proof}

\subsection{Bounding the average analytic rank of elliptic curves}

We now prove our main result.

\vspace{2mm} 

\begin{reptheorem}[\ref{thm:AvgRank}]
Let $K$ be a number field of degree $d$. Assume that all elliptic curves over $K$ are modular and that their $L$-functions satisfy the Riemann-Hypothesis. Then the average analytic  rank of isomorphism classes of elliptic curves over $K$, when ordered by naive height, is bounded above by $(9d+1)/2$.
\end{reptheorem}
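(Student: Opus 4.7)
The plan is to combine the averaged explicit formula \eqref{eq:explicit-average-rank} with the estimates on $S_1(\phi,B)$ and $S_2(\phi,B)$ from Lemmas \ref{lem:S1} and \ref{lem:S2}, and then optimize the choice of $\nu$. Since our test function $\phi$ is a squared sinc and hence nonnegative, the sum over nontrivial zeros on the left of \eqref{eq:explicit-average-rank} can be dropped, yielding
\[
\phi(0)\cdot \frac{1}{\#\mcE_K(B)}\sum_{E\in\mcE_K(B)} r(E)\leq \widehat{\phi}(0)-S_1(\phi,B)-S_2(\phi,B)+O\!\left(\tfrac{1}{\log B}\right).
\]
Dividing by $\phi(0)=\nu^2/4$ and using $\widehat{\phi}(0)/\phi(0)=1/\nu$, a bound $r_K\leq 1/\nu+1/2+o(1)$ will follow once I show $-S_1(\phi,B)-S_2(\phi,B)=\phi(0)/2+o(1)$. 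Taking $\nu=1/(3d)$ (the largest admissible value) will then give the claimed $3d+1/2$.

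To handle $S_1$, I plug Lemma \ref{lem:S1} and $\#\mcE_K(B)\sim\kappa B^{5/6}$ into the definition of $S_1(\phi,B)$, getting two contributions: one of shape $\frac{1}{\log B}\sum_{q_v\le B^\nu}\frac{\log q_v}{q_v^2}\widehat{\phi}(\cdot)$, which is $O(1/\log B)$ since the series converges, and one of shape $\frac{B^{-1/(3d)}}{\log B}\sum_{q_v\le B^\nu}\log q_v$, which by the prime ideal theorem for $K$ is $\ll B^{\nu-1/(3d)}/\log B$. With $\nu\le 1/(3d)$, both terms are $o(1)$.

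For $S_2$, Lemma \ref{lem:S2} produces a main contribution of $-\frac{2\kappa'}{\kappa\log B}\sum_{q_v\leq B^{\nu/2}}\frac{\log q_v}{q_v}\,\widehat{\phi}\!\left(\frac{2\log q_v}{\log B}\right)$, plus error terms handled exactly as for $S_1$. Applying partial summation together with Landau's prime ideal theorem, $\sum_{q_v\leq X}\log q_v = X+o(X)$, and the substitution $u=\log q_v/\log B$ converts the weighted prime sum to
\[
\log B\int_0^{\nu/2}\widehat{\phi}(2u)\,du + o(\log B)=\log B\cdot\frac{\nu^2}{16}+o(\log B),
\]
since $\widehat{\phi}(t)=(\nu-|t|)/4$ on its support. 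Observing from the explicit formulas in Corollary \ref{cor:EllipticCount} and Theorem \ref{thm:LocalConditions} that $\kappa'=\kappa$, this collapses to $-S_2(\phi,B)=\nu^2/8+o(1)=\phi(0)/2+o(1)$, as required.

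The main obstacle is the careful bookkeeping in the $S_2$ computation: one must apply the prime ideal theorem with sufficient uniformity, confirm that both the main term and error-term factors produced by Lemma \ref{lem:S2} (the $O(B^{5/6})$ and $O(qB^{5/6-1/3d})$ pieces) contribute only $O(1/\log B)$ and $O(1)$ respectively after dividing by $\kappa B^{5/6}$ and summing against $\log q_v/q_v^2$, and verify the normalization $\kappa'/\kappa=1$ so that $-S_2$ equals $\phi(0)/2$ exactly rather than a nonunit multiple thereof. Everything else is a direct substitution: dropping the zero sum, choosing $\nu=1/(3d)$, and reading off $r_K\leq 3d+1/2$.
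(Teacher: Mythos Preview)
Your proposal is correct and follows essentially the same route as the paper's proof: plug Lemmas \ref{lem:S1} and \ref{lem:S2} into \eqref{eq:explicit-average-rank}, verify $-S_1-S_2=\phi(0)/2+o(1)$, and take $\nu=1/(3d)$. Your write-up is in fact a bit more careful in two places: you retain the $1/\log B$ factor when bounding the convergent sum $\sum_v \log q_v/q_v^2$ in $S_1$ (the paper silently drops it), and you spell out the partial-summation step and the identity $\kappa'=\kappa$ that reduce the main term of $-S_2$ to exactly $\phi(0)/2$, which the paper asserts without detail.
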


\begin{proof}
By Lemma \ref{lem:S1} and the observation that $\widehat{\phi}$ is supported on the interval $[-\nu,\nu]$, we have that
\begin{align*}
S_1(\phi,B)&=\frac{2}{\log(B) \#\mcE_{K}(B)} \sum_{\substack{
v\in \Val_0(K)\\ 2\nmid q_v,\ 3\nmid q_v}} \frac{\log(q_v)}{q_v} \cdot \widehat{\phi}\left( \frac{\log(q_v)}{\log(B)}\right)\sum_{E\in\mcE_{K}(B)} \widehat{a_E}(\p_v)\\
&\ll_{\epsilon} \frac{2}{\log(B) \#\mcE_{K}(B)} \sum_{\substack{v\in \Val_0(K)\\ 2\nmid q_v,\ 3\nmid q_v}} \frac{\log(q_v)}{q_v} \cdot \widehat{\phi}\left( \frac{\log(q_v)}{\log(B)}\right)\left(\frac{1}{q_v} B^{5/6}+q_v^{3/2+\epsilon}B^{5/6-1/3d}\right)\\
&\ll \frac{2}{\log(B) \#\mcE_{K}(B)} \sum_{\substack{v\in \Val_0(K) \\ q_v\leq B^\nu \\ 2\nmid q_v,\ 3\nmid q_v}} \frac{\log(q_v)}{q_v}  \left(\frac{1}{q_v} B^{5/6}+q_v^{3/2+\epsilon}B^{5/6-1/3d}\right).
\end{align*}
By Corollary \ref{cor:EllipticCount}, we know that $\#\mcE_K(B)=\kappa B^{5/6}+O(B^{5/6-1/3d})$. Using this and the prime ideal theorem, we have that 
%\begin{align}
%\begin{split}
%S_1(\phi,B)
%&\ll_{\epsilon} \frac{2}{\log(B)} \sum_{\substack{v\in \Val_0(K) \\  q_v\leq B^\nu \\ 2\nmid q_v,\ 3\nmid q_v}} \frac{\log(q_v)}{q_v}  \left(\frac{1}{q_v}+q_v^{3/2+\epsilon}B^{-1/3d}\right) \label{eq:S1}\\
%&\ll \frac{1}{\log(B)} \left(\frac{1}{B^\nu} + B^{(3/2+\epsilon)\nu-1/3d}\right)\\
%&\ll\frac{1+B^{(3/2+\epsilon)\nu-1/3d}}{\log(B)}.
%\end{split}
%\end{align}
\begin{align}\label{eq:S1}
S_1(\phi,B)
\ll_{\epsilon} \frac{2}{\log(B)} \sum_{\substack{v\in \Val_0(K) \\  q_v\leq B^\nu \\ 2\nmid q_v,\ 3\nmid q_v}} \frac{\log(q_v)}{q_v}  \left(\frac{1}{q_v}+q_v^{3/2+\epsilon}B^{-1/3d}\right) 
\ll\frac{1+B^{(3/2+\epsilon)\nu-1/3d}}{\log(B)}.
\end{align}
By Lemma \ref{lem:S2}, we have 
\begin{align*}
S_2(\phi,B)
&=\frac{2}{\log(B) \#\mcE_{K}(B)} \sum_{\substack{v\in \Val_0(K) \\ 2\nmid q_v,\ 3\nmid q_v}} \frac{\log(q_v)}{q_v^2} \cdot \widehat{\phi}\left(\frac{2\log(q_v)}{\log(B)}\right)\sum_{E\in\mcE_{K}(B)} \widehat{a_E}(\p_v^2)\\
&\hspace{-15mm}=\frac{2}{\log(B) \#\mcE_{K}(B)} \sum_{\substack{v\in \Val_0(K) \\ 2\nmid q_v,\ 3\nmid q_v}} \frac{\log(q_v)}{q_v^2} \cdot \widehat{\phi}\left(\frac{2\log(q_v)}{\log(B)}\right)\left(-\kappa q_v B^{\frac{5}{6}}+O(q_v^{\frac{1}{2}+\epsilon}B^{\frac{5}{6}}+q_v^2 B^{\frac{5}{6}-\frac{1}{3d}})\right).
\end{align*}
Assuming $\epsilon<1/4$ and using $\#\mcE_K(B)=\kappa B^{5/6}+O(B^{5/6-1/3d})$, we have
\begin{align*}
&S_2(\phi,B)
=\frac{2}{\log(B)} \sum_{\substack{v\in \Val_0(K) \\ 2\nmid q_v,\ 3\nmid q_v}} \frac{\log(q_v)}{q_v^2} \cdot \widehat{\phi}\left( \frac{2\log(q_v)}{\log(B)}\right)\left(-q_v +O(q_v^{1/2+\epsilon}+q_v^2 B^{-1/3d})\right)\\
&=\frac{-2}{\log(B)} \sum_{\substack{v\in \Val_0(K) \\ 2\nmid q_v,\ 3\nmid q_v}} \frac{\log(q_v)}{q_v} \cdot \widehat{\phi}\left( \frac{2\log(q_v)}{\log(B)}\right) +O\left(\frac{1}{\log{B}}\sum_{\substack{v\in \Val_0(K)  \\ q_v\leq B^{\nu/2}}}\left(\frac{1}{q_v^{3/2-\epsilon}}+\log(q_v)B^{-1/3d}\right)\right)\\
&=\frac{-1}{\log(B)} \sum_{\substack{ v\in \Val_0(K) \\  q_v\leq B^{\nu/2} \\ 2\nmid q_v,\ 3\nmid q_v}} \left(\frac{\nu \log(q_v)}{2q_v} - \frac{\log(q_v)^2}{q_v\log(B)}\right)
+O\left(\frac{1+B^{\nu/2-1/3d}}{\log(B)}\right).
\end{align*}
Arguments using the prime ideal theorem show that
\[
\sum_{\substack{ v\in \Val_0(K)  \\ q_v\leq X}} \frac{\log(q_v)}{q_v}=\log(X)+O(1)
\]
and
\[
\sum_{\substack{ v\in \Val_0(K)  \\ q_v\leq X}} \frac{\log(q_v)^2}{q_v}=\frac{\log(X)^2}{2}+O(\log\log(X)).
\]
From this it follows that
\begin{align*}
&\sum_{\substack{ v\in \Val_0(K)  \\ q_v\leq B^{\nu/2} \\ 2\nmid q_v,\ 3\nmid q_v}} \left(\frac{\nu \log(q_v)}{2q_v} - \frac{\log(q_v)^2}{q_v\log(B)}\right)\\
&= \frac{\nu}{2} \left(\log(B^{\nu/2})+O(1)\right)-\frac{1}{\log(B)}\left(\frac{\log(B^{\nu/2})^2}{2}+O(\log\log(B^{\nu/2}))\right) \\
%&=\left(\frac{\nu^2}{4}\log(B)+O(1)\right)-\left(\frac{\nu^2}{8}\log(B)-\nu/2+O\left(\frac{\log\log(B^{\nu/2})}{\log(B)}\right)\right)\\
&=\left(\frac{\nu^2}{4}\log(B)+O(1)\right)-\left(\frac{\nu^2}{8}\log(B)+O(1)\right)\\
&=\frac{\nu^2}{8}\log(B)+O(1).
\end{align*}
Therefore
\[
S_2(\phi,B)=\frac{-\phi(0)}{2}+O\left(\frac{1+B^{\nu/2-1/3d}}{\log(B)}\right).
\]

Taking $\nu=(3d(3/2+\epsilon))^{-1}$ and combining the above estimate for $S_2$ with our bound (\ref{eq:S1}) for $S_1$, we obtain
\[
-S_1(\phi,B)-S_2(\phi,B)=\frac{\phi(0)}{2}+o(1).
\]
Substituting this into (\ref{eq:explicit-average-rank}) and taking the limit superior as $B$ goes to infinity gives the following upper bound for the average analytic rank of elliptic curves over $K$:
\[
\frac{1}{\nu}+\frac{1}{2}=3d(3/2+\epsilon)+\frac{1}{2},
\]
for any $0<\epsilon<1/4$. Since the limit superior exists, and since $\epsilon$ can be taken arbitrarily small, we obtain the bound $(9d+1)/2$.
\end{proof}

\bibliographystyle{alpha}
\bibliography{bibfile}

\end{document}